\newif\ifAMS
\AMStrue\usepackage{amssymb}}
\theoremstyle{plain}
\newtheorem{Thm}{Theorem}
\newtheorem{Cor}[Thm]{Corollary}
\newtheorem{Lem}[Thm]{Lemma}
\theoremstyle{definition}
\newtheorem{Def}{Definition}
\theoremstyle{remark}
\newtheorem{Qu}{Question}
\newtheorem{Ex}[Thm]{Example}
\newcommand{\interior}{^{ \kern-5pt ^\circ}}
\newcommand {\bd}{\partial}
\newcommand {\iy}{\infty}
\newcommand {\N}{{\mathbb N}}
\newcommand {\R}{{\mathbb R}}
\newcommand {\Z}{{\mathbb Z}}
\newcommand {\E}{{\mathbb E}}
\newcommand {\HH}{{\mathbb H}}
\newcommand {\SSS}{{\mathbb S}}
\newcommand {\nb}{\text{Nbh}}
\newcommand {\diam}{\text{diam}}
\newcommand {\radius}{\text{radius}}
\begin{document}
\title{On cyclic CAT(0)  domains of discontinuity}

\author
{Eric Swenson }

\subjclass{54F15,54F05,20F65,20E08}

\email [Eric Swenson]{eric@math.byu.edu}
\address
[Eric Swenson] {Mathematics Department, Brigham Young University,
Provo UT 84602}

\begin{abstract} Let $X$ be a CAT(0) space, and $G$ a discrete  cyclic group of isometries of $X$.
We investigate the domain of discontinuity for the action of $G$ on the boundary $\bd X$.
\end{abstract}
\maketitle
\section {Introduction}
The idea of a domain of discontinuity was first investigated in the setting of Kleinian groups, discrete groups of isometries of hyperbolic $m$-space, $\HH^m$.  Using the Poincare ball model  of $\HH^m$, where $\HH^m$ is the interior of the open unit $m$-ball, we see that the boundary of $\HH^m$ is the unit sphere $\SSS^{m-1}$, and we see that isometries of $\HH^m$ extent to homeomorphsims of  the closed $m$-ball, $\HH^m \cup \SSS^{m-1}$.

Let $G$ be a discrete group of isometries of $\HH^m$, and $(g_i)$ a sequence of distinct elements of $G$.  Fixing a point $a \in \HH^m$, using compactness and passing to a subsequence we may assume that $g_i(a) \to p \in \SSS^{m-1}$ and $g^{-1}_i(a) \to n \in \SSS^{m-1}$. It can be shown that  $g_i(x) \to p$ uniformly on compact subsets of $\SSS^{m-1} -\{n\}$.  
\begin{Def}
Let $G$ be a group of homeomorphsim of a compact hausdorf space $Z$, we say that $G$ acts on $Z$ as a discrete convergence group if for each sequence of distinct elements of $G$, there is a subsequence $(g_i) \subset G$ and points $n,p \in Z$ such that $g_i(x) \to p$ uniformly on compact subsets of $Z$.  As shown above,  Kleinian groups of dimension $m$ act as a discrete convergences group on $\SSS^{m-1}$.
\end{Def}
The set of all such $p$ is the limit set of the Kleinian group $G$, $$\Lambda (G) = \{ p \in \SSS^{m-1}: \exists (g_i) \subset G\text{ with
}g_i(x) \to p \text{ for some }x \in \HH^m\}$$ and the domain of discontinuity of $G$, 
$\Omega G = \SSS^{m-1} - \Lambda G$. Clearly $\Lambda G$ is $G$-invariant and  closed, so $\Omega G$ is $G$-invariant and  open.  Let $p \in \Lambda G$ and $(g_i)\subset G$ with $g_i(x) \to p$ and $g_i^{-1}(x) \to n \in \Lambda G$ for some (any) $x \in \HH^m$.  Then
for any $a \in \Omega G$, $g_i(a) \to p$.  Thus if $\Omega G \neq \emptyset$ then $p$ is a limit point of $\Omega G$, and so $\Omega G$ is dense in $\SSS^{m-1}$.

We have the following result: If $G$ is a discrete group of isometries of hyperbolic $m$-space, $\HH^m$, then $\bd \HH^m = \SSS^{m-1}$ is a disjoint union of the limit set of $G$, $\Lambda G$ and the domain of discontinuity of $G$, $\Omega(G)$, where $\Omega G$ is open and either dense or empty.  Since $G$ acts as a convergence group on $\SSS^{m-1}$,  the action of $G$ on $\Omega G$ is properly discontinuous (hence the name domain of discontinuity). 

This result holds whenever $G$ acts as a discrete convergence group on a compact Hausdorff space, in particular when $G$ is a discrete group of isometries of a proper $\delta $-hyperbolic space.  

In this paper we consider the case where the negative curvature ($\delta$-hyperbolic) condition is relaxed to a non-positive curvature (CAT(0)) condition.  Considering the action of $\Z^m$ on $\R^m$
we see that often the action on the boundary may be trivial and so the domain of discontinuity must be empty whenever the group is infinite. Thus the boundary will not in general be the union of the limit set and the domain of discontinuity. 

The conjecture seems to be that if a cyclic subgroup acts ``nicely" on a CAT(0) space and the action on the boundary is not virtually trivial, then there should be an open dense subset of the boundary on which the  cyclic subgroup acts properly discontinuously.  

This conjecture is realized when the Tits diameter of the boundary is large, but still open in the other cases.  The case where the fixed points of the cyclic subgroup are spherical suspension points of the boundary will not be addressed in this note.

The author wishes to thank G. Levitt and K. Ruane for helpful conversations.

\section{Definitions and Basic results} 
We refer the reader to \cite{BRI-HAE} or \cite{BAL} for more details of the following.
\begin{Def} For $X$ a geodesic metric space and $\Delta(a,b,c)$ a geodesic triangle in $X$ with vertices $a,b,c \in X$ there is a {\em comparison }triangle $\bar \Delta=\Delta(\bar a,\bar b, \bar c) \subset \E^2$ with 
$d(a,b) =d(\bar a, \bar b)$, $d(a,c) = d(\bar a, \bar c)$ and $d(b,c)=d(\bar b, \bar c)$.  
We define the comparison angle $\bar \angle_a(b,c) =\angle_{bar a}(\bar b,\bar c)$.  

Each point $z \in \Delta(a,b,c)$ as a unique comparison point, $\bar z \in \bar \Delta$ on the edge corresponding to the edge of $z$ and in the corresponding location on that edge.  We say that the triangle $\Delta(a,b,c)$ is CAT(0) if for any $y, z \in \Delta(a,b,c)$ with comparison points $\bar y, \bar z \in \bar \Delta$, $d(y,z) \le d(\bar y,\bar z)$. The space $X$ is said to be CAT(0) if every geodesic triangle in $X$ is CAT(0). 
\end{Def}
If $X$ is CAT(0), notice that for any geodesics $\alpha:[0, r] \to X$ and $\beta:[0,s] \to X$ with $\alpha(0)=\beta(0)=a$, the function $\theta(r,s) =\bar \angle_{r(0)}(\alpha(r),\beta(s))$ is an increasing function of  $r,s$.  Thus $\lim\limits_{r,s \to 0} \theta(r,s)$ exists and we call this limit $\angle_a(\alpha(r),\beta(s))$.  It follows that for any $a,b,c \in X$, a CAT(0) space, $\angle_a(b,c) \le \bar \angle_a(b,c)$.

The
(visual) boundary, $\bd X$, is the set of equivalence classes of
rays, where rays  are equivalent if they fellow travel.  Given a
ray $R$ and a point $x \in X$ there is a ray $S$ emanating from
$x$ with $R \sim S$.  Fixing a base point $\mathbf 0 \in X$ we
defined a Topology on $\bar X= X \cup \bd X$ by taking the basic
open sets of $ x \in X$ to be the open metric balls about $x$. For
$y \in \bd X$, and $R$ a ray from $\mathbf 0$ representing $y$, we
construct  basic open sets $U(R,n,\epsilon)$ where $n,\epsilon>0$.
We say $z \in U(R,n,\epsilon)$ if the unit speed geodesic,
$S:[0,d(\mathbf 0,z)] \to \bar X$, from $\mathbf 0$ to $z$
satisfies $d(R(n),S(n)) <\epsilon$.  These sets form a basis for a regular
topology on $\bar X$ and $\bd X$. For any $x \in X$ and $u,v \in \bd X$ we can define 
$\angle_x(u,v) $ and $\bar \angle_x(u,v)$ by parameterizing the rays $[x,u)$ and $[x,v)$ by
$t\in [0,\infty)$ and taking the limit as $t\to 0$ and $t \to \iy$ respectively. 

 for $u,v \in \bd X$, we define $\angle(u,v) =
\sup\limits_{p \in X} \angle_p(u,v)$.
 It follows from \cite{BRI-HAE} that $\angle(u,v) =
\overline{\angle}_p(u,v)$ for any $p \in X$.
 Notice that isometries of $X$ preserve the angle between points of $\bd X$.
 The angle defines a path metric, $d_T$ on the set $\bd X$, called the Tits
metric, whose topology is finer than the given topology of $\bd
X$.  Also $\angle(a,b)$ and $ d_T(a,b)$ are equal whenever either
of them are less than $\pi$.

  The set $\bd X$ with the Tits metric is called the Tits  boundary of $X$,
denoted  $TX$.  Isometries of $X$ extend to isometries of $TX$

  The identity function $TX \to \bd X$ is continuous, but the identity
function
 $\bd X \to TX$ is only lower semi-continuous.  That is for any sequences
$(u_n), (v_n)  \subset \bd X$ with $u_n \to u$ and $v_n \to v$ in
$\bd X$, then $$\varliminf d_T(u_n,v_n) \ge d_T(u,v)$$
\begin{Lem} \label{L:horo} Let $v \in \bd X$  and $H \subset X$ a horoball centered at $v$,
then $\bar H \cap \bd X \subset \bar B_T(v, \frac  \pi 2)$ the closed Tits ball  about $v$.
\end{Lem}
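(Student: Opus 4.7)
The plan is to use the convexity and closedness of the horoball $H$ together with an explicit formula for the comparison angle $\bar\angle_p(y, v)$ in terms of the Busemann function $b_v$.

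If $H$ is empty there is nothing to prove, so fix $p \in H$ and normalize the Busemann function at $v$ by requiring $b_v(p) = 0$, where $b_v(y) := \lim_{s \to \infty}(d(y, S_v(s)) - s)$ for $S_v$ the ray from $p$ to $v$. Then $H = \{y \in X : b_v(y) \leq c\}$ for some $c \geq 0$.

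Given $u \in \bar H \cap \bd X$, the first step is to show that the ray $R_u$ from $p$ to $u$ lies entirely in $H$. This follows from closedness and convexity of $H$: picking $y_n \in H$ with $y_n \to u$ in $\bar X$, the segments $[p, y_n]$ lie in $H$ by convexity and converge to $R_u$ on compact sets by continuity of CAT(0) geodesics, so $R_u(t) \in H$ for every $t \geq 0$.

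The key step is the identity
\[
\cos \bar\angle_p(y, v) \;=\; -\frac{b_v(y)}{d(p, y)} \qquad (y \in X \setminus \{p\}),
\]
obtained by applying the Euclidean law of cosines to the comparison triangle for $\Delta(p, y, S_v(s))$, substituting the asymptotic $d(y, S_v(s)) = s + b_v(y) + o(1)$, and letting $s \to \infty$. Plugging $y = R_u(t)$ into this formula and using $b_v(R_u(t)) \leq c$ gives $\cos \bar\angle_p(R_u(t), v) \geq -c/t$. As $t \to \infty$, the right-hand side tends to $0$, while the left-hand side tends to $\cos \bar\angle_p(u, v) = \cos \angle(u, v)$ by the parameterization definition of $\bar\angle_p$ at a boundary point and the equality $\bar\angle_p(u, v) = \angle(u, v)$ quoted in the excerpt. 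Hence $\angle(u, v) \leq \pi/2$, and since $\angle(u, v) < \pi$, the final remark preceding the lemma gives $d_T(u, v) = \angle(u, v) \leq \pi/2$, i.e., $u \in \bar B_T(v, \pi/2)$.

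The main obstacle is the comparison-angle formula: the error term $o(1)$ in $d(y, S_v(s)) - s$ must be controlled carefully when squaring and dividing by $2 d(p, y) s$, since $s$ tends to infinity. Everything else reduces to closedness and convexity of horoballs, continuity of CAT(0) geodesics with respect to endpoints in $\bar X$, and the angle identities already recorded in the excerpt.
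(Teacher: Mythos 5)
Your proof is correct, and while it rests on the same underlying fact as the paper's (the comparison triangle along the ray to $v$ together with the asymptotic $d(y,S_v(s))=s+b_v(y)+o(1)$, i.e.\ the content of \cite[Exercise II 8.23]{BRI-HAE}), you package it differently. The paper argues by contradiction: it picks an interior point $\hat w\in H$ close enough to $w$ that $\angle_y(v,\hat w)>\pi/2$ still holds, and then shows the sign of the projection parameter $r_n$ forces the comparison angle below $\pi/2$. You instead argue directly: you use convexity and closedness of the horoball to put the entire ray $[p,u)$ inside $H$, apply the exact identity $\cos\bar\angle_p(y,v)=-b_v(y)/d(p,y)$ at the points $R_u(t)$, and let $t\to\infty$ so that the allowance $b_v\le c$ washes out and monotonicity of comparison angles gives $\bar\angle_p(u,v)\le\pi/2$ in the limit. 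What your route buys is that it sidesteps the delicate step of transferring a strict angle inequality from the boundary point $w$ to a nearby interior point (the Alexandrov angle is only semicontinuous, so the paper's "there exists $\hat w\in H$ with $\angle_y(v,\hat w)>\pi/2$" needs some care), replacing it with the robust and standard facts that Busemann sublevel sets are convex and closed and that geodesics to converging points converge. The only items you should make explicit in a final write-up are the convexity of $H$ (Busemann functions are convex as limits of the convex functions $d(\cdot,R(t))-t$) and the bookkeeping in the law-of-cosines limit, both of which are routine.
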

\begin{proof} Suppose that $w \in \bar H \cap \bd X$ with $\angle(v,w) > \frac \pi 2 $.
Choose a point $y \in X$ with $\angle_y(v,w) > \frac \pi 2 $.

Let $R:[0,\infty)\to X$ be the geodesic ray from $y$ to $v$, and $b_R:X \to \R$ be the Buseman function associated to $R$, $b_R(x) = \lim\limits_{t \to \infty} [d(x,R(t))-t]$.
Note the $b_R(y) = 0$ and $b_R(R(t)) = -t$ for $t>0$.
By \cite[Exercise II 8.23(1)]{BRI-HAE} any horoball based at $v$ is within finite distance of any other horoball based at $v$.  It follows that all horoballs based at $v$ will have the same limit points in the boundary.  Thus we may assume that $H = b_R^{-1}([-1,-\iy))$. 

Since $w \in \bar H$, there exists $\hat w \in H$ such that $\angle_y(v,\hat w) >\frac \pi 2$.

We recall \cite[Exercise II 8.23(4)]{BRI-HAE}. Choose a geodesic line $\bar R:\R \to \E^2$.
For each $x\in X$ and $n \in \N$ consider a  comparison triangle $\bar \Delta(x, R(0), R(n))$ with $\bar R(i)$ the comparison point for $R(i)$ ($i=0,n$) and $x_n\in \E^2$ the comparison point for $x$.  Choose $r_n \in \R$ with $\bar R(r_n) = \pi_{\bar R}(x_n)$ (where $\pi_{\bar R}$ is orthogonal projection to the line $\bar R$).  Then $r_n \to r$ where $b_R(x)=-r $.  Since $b_R(\hat w) \le -1$,  for $n>>0$, $r_n >0$.  It follows that $\angle_{\bar R(0)}(\bar R(n), \hat w_n) < \frac \pi 2$.  However $$\frac \pi 2 < \angle_y(v,\hat w) =\angle_{R(0)}(R(n), \hat w) \le \bar \angle_{R(0)}(R(n), \hat w) = \angle_{\bar R(0)}(\bar R(n), \hat w_n) < \frac \pi 2$$ which is a contradiction.  
\end{proof}

\section{Large Tits radius}
Recall that for any $u \in \bd X$, $B_T(u, \epsilon) = \{v \in \bd X : d_T(u,v) < \epsilon\}$
and $\bar B_T(u, \epsilon) = \{v \in \bd X : d_T(u,v) \le \epsilon\}$, where $d_T$ is the Tits metric.
For $A \subset \bd X$, we define 
$$\radius_T (A)= \inf\{r: A \subset B_T(u, r) \text{ for some } u \in \bd X\}.$$

If $g$ is an infinite order isometry of $X$, and $\langle g\rangle$ is proper, then $g$ is either hyperbolic or parabolic.  When $g$ is hyperbolic it acts by translation on a line (called an axis of $g$) in $X$ with endpoints $g^+$ (in the direction of translation) and $g^-$ (see \cite{BRI-HAE}).

Recall that a hyperbolic isometry $h$ of $X$ is called {\em rank }1, if $h$ has an axis $L$ which doesn't bound a half flat.  
From \cite{BAL}  we see that $d_T(h^+, \alpha) = \iy$ for all $\alpha \neq h^+$, so if $X$ has a rank 1 isometry then $\radius_T (\bd X)=\iy$

Recall a result from \cite{PA-SW}
\begin{Thm}\label{T:pi} \cite{PA-SW}
Let $X$ be a complete  CAT(0) space and $(g_i) $ a sequence of isometries of $X$
with the
property that $g_i(x) \to p \in \bd X$ and $g_i^{-1}(x) \to n \in
\bd X$ for any $x \in X$.
Then for any $\theta \in [0, \pi]$ and  any compact set $K \subset \bd X
-\overline{B}_T(n,\theta)$, $g_n(K) \to \overline{B}_T(p,  \pi
-\theta)$ (in the sense that for any open $U \supset
\overline{B}_T(p, \pi -\theta)$, $g_n(K) \subset U$ for all $n$
sufficiently large).
\end{Thm}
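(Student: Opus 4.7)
My plan is to argue by contradiction. Suppose the conclusion fails: there is an open $U \supset \bar B_T(p,\pi-\theta)$ and a subsequence (still called $g_i$) with $g_i(K) \not\subset U$. Pick $v_i \in K$ with $g_i(v_i) \notin U$; by visual compactness of $K$ and $\bd X$, pass to further subsequences so that $v_i \to v \in K$ and $g_i(v_i) \to q \in \bd X \setminus U$. Since $K \cap \bar B_T(n,\theta) = \emptyset$ and $K$ is closed in the visual topology, $d_T(v,n) > \theta$; and $q \notin U \supset \bar B_T(p,\pi-\theta)$ gives $d_T(p,q) > \pi-\theta$. Hence $d_T(v,n) + d_T(p,q) > \pi$, and the contradiction to be produced is the matching upper bound $d_T(p,q) + d_T(v,n) \le \pi$.

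The engine is a CAT(0) angle-sum estimate. At a basepoint $x \in X$, the isometry $g_i$ gives the key identity $\angle_{g_i(x)}(x, g_i(v_i)) = \angle_x(g_i^{-1}(x), v_i)$. For large $T$, apply the CAT(0) triangle inequality to $\Delta(x, g_i(x), z_i(T))$, where $z_i(T) := g_i\rho_{v_i}(T)$ is a point at distance $T$ along the ray $[g_i(x), g_i(v_i))$, and $\rho_{v_i}$ is the ray from $x$ representing $v_i$. Summing the angles at $x$ and $g_i(x)$ and dropping the (nonnegative) angle at $z_i(T)$ yields
$$\angle_x(g_i(x), z_i(T)) + \angle_x(g_i^{-1}(x), \rho_{v_i}(T)) \le \pi.$$
Letting $T \to \infty$ converts the second arguments into the boundary points $g_i(v_i)$ and $v_i$. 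Letting $i \to \infty$, using the visual convergences $g_i(x) \to p$, $g_i(v_i) \to q$, $g_i^{-1}(x) \to n$, $v_i \to v$, together with the lower semi-continuity of angles at $x$ (a direct consequence of the LSC of $d_T$ recorded earlier), produces the pointwise bound $\angle_x(p,q) + \angle_x(n,v) \le \pi$ valid at every basepoint.

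The main obstacle is promoting this pointwise bound to the Tits-level inequality $d_T(p,q) + d_T(v,n) \le \pi$, since the suprema $d_T(u,w) = \sup_x \angle_x(u,w)$ (for $d_T < \pi$) are a priori attained at different basepoints, so termwise sup does not preserve the sum bound. My strategy is to vary the basepoint strategically: because $d_T(v,n) > \theta$ is strict, choose $y \in X$ with $\angle_y(v,n) > \theta$, and re-run the entire argument with $y$ in place of $x$. This is legitimate because $g_i(y) \to p$ and $g_i^{-1}(y) \to n$ hold at every $y \in X$ by distance-preservation (the displacement $d(g_i(y), g_i(x)) = d(y,x)$ is bounded). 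At $y$ one obtains $\angle_y(p,q) \le \pi - \angle_y(v,n) < \pi - \theta$. Then, choosing a sequence of basepoints $y_k$ along which $\angle_{y_k}(v,n) \nearrow d_T(v,n)$ and analyzing the induced limit of $\angle_{y_k}(p,q)$, one aims to conclude $d_T(p,q) \le \pi - d_T(v,n) \le \pi - \theta$, contradicting $d_T(p,q) > \pi - \theta$. I expect the delicate point—the real technical heart of the proof—to be this last transfer: justifying that basepoints chosen to nearly realize $d_T(v,n)$ also furnish an angle $\angle_{y_k}(p,q)$ approaching $d_T(p,q)$, which is where the CAT(1) geometry of $TX$ must be invoked beyond a single-triangle angle sum.
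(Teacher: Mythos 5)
The paper does not prove this theorem; it is quoted from \cite{PA-SW}. Measured against the intended argument, your proposal has the right skeleton (the triangle $x$, $g_i(x)$, $g_i(v_i)$ and the identity $\angle_{g_i(x)}(x,g_i(v_i))=\angle_x(g_i^{-1}(x),v_i)$), but it has a genuine gap, and in fact two related ones. First, the passage to the limit $i\to\iy$ is not justified: the Alexandrov angle $\angle_x(\cdot,\cdot)$ at a fixed apex is a \emph{decreasing} limit (an infimum) of comparison angles of nearby points, hence upper semicontinuous in its arguments; lower semicontinuity of $d_T$ does not transfer to $\angle_x$, since $\angle_x\le d_T$ with possibly strict inequality, and an LSC upper bound gives no LSC for the smaller quantity. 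So the pointwise bound $\angle_x(p,q)+\angle_x(n,v)\le\pi$ is not established by the argument given. Second, and more importantly, the step you yourself flag as the ``real technical heart'' --- promoting a pointwise angle bound to $d_T(p,q)+d_T(v,n)\le\pi$ by choosing basepoints $y_k$ that nearly realize $\angle_{y_k}(v,n)$ --- is left open, and it does not work as proposed: basepoints nearly realizing the supremum defining $d_T(v,n)$ have no reason to nearly realize $d_T(p,q)$, so the two suprema cannot be coupled this way. As written, the proof is incomplete.

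Both difficulties disappear, and the argument becomes the one in \cite{PA-SW}, if you replace the Alexandrov angle $\angle_x$ by the Euclidean comparison angle $\bar\angle_x$ throughout. Three facts do all the work. (i) For the comparison triangle of $(x,\,g_i(x),\,g_i(R(t)))$ (where $R$ is the ray from $x$ to $v_i$), two of its Euclidean angles sum to at most $\pi$, giving $\bar\angle_x\bigl(g_i(x),g_i(R(t))\bigr)+\bar\angle_x\bigl(g_i^{-1}(x),R(t)\bigr)\le\pi$. (ii) For ideal points the comparison angle from a \emph{single} basepoint already computes the Tits angle: $\angle(u,v)=\bar\angle_x(u,v)$ for every $x$ (this is recorded in Section 2 of the paper), so no variation of basepoints and no supremum is ever needed --- your ``main obstacle'' is an artifact of using $\angle_x$ instead of $\bar\angle_x$. (iii) Because $\bar\angle_x(y,z)\ge\bar\angle_x([x,y](s),[x,z](s))$ for all $s$ and geodesics from $x$ converge under cone convergence of their endpoints, $\bar\angle_x$ is lower semicontinuous when its arguments converge to boundary points; applying this along a suitable diagonal sequence $t_i\to\iy$ with $g_i(R(t_i))\to q$ yields $d_T(p,q)+d_T(n,v_i\text{-limit})\le\pi$ directly. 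One further caveat: your extraction of a convergent subsequence $g_i(v_i)\to q$ uses sequential compactness of $\bd X$, which holds for proper $X$ but not for merely complete $X$ as stated in the hypothesis; for the non-proper case the convergence statement has to be phrased so as to avoid this.
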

This Theorem is stated in \cite{PA-SW} with stronger hypothesis, but they were not used in the proof. 
We will refer to this result as $\pi$-convergence.
For a given hyperbolic element $h$ with axis $L$, we set $g_i= h^i$, $n = L(-\iy)= h^-$ and $p = L(\iy)=h^+$.

The following theorem is due to Ballmann.  
\begin{Thm}[Ballmann]\label{T:Ball}  If $h$ is a rank 1 isometry of the complete CAT(0) space $X$, then the group generated by $h$, $\langle h\rangle$ act properly discontinuously on the open set $\bd X - \{h^{\pm }\}$ which is dense if it is non-empty.
\end{Thm}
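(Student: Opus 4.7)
The plan is to deduce both conclusions directly from the $\pi$-convergence theorem (Theorem \ref{T:pi}) applied to the sequence $g_i = h^i$, with $n = h^-$ and $p = h^+$. The decisive consequence of the rank~$1$ hypothesis, via the cited result of Ballmann, is that $d_T(h^+,\alpha)=\infty$ for every $\alpha\neq h^+$ and similarly for $h^-$. Hence for every finite $r\ge 0$ one has
\[
\bar B_T(h^+, r) = \{h^+\}\qquad\text{and}\qquad \bar B_T(h^-, r) = \{h^-\}.
\]
Feeding this into Theorem \ref{T:pi} with, say, $\theta=\pi/2$, gives the clean statement that for any compact $K\subset\bd X-\{h^-\}$, the iterates $h^i(K)$ eventually enter every neighborhood of $h^+$; the analogous statement with the roles of $h^+$ and $h^-$ swapped applies to $h^{-i}$. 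Openness of $\bd X-\{h^\pm\}$ is immediate, since $\{h^\pm\}$ is finite and $\bd X$ is Hausdorff.

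For proper discontinuity, let $K\subset \bd X-\{h^\pm\}$ be compact. Using Hausdorffness of $\bd X$, separate $K$ from $h^\pm$ by choosing an open set $W\supset K$ and open neighborhoods $U\ni h^+$, $V\ni h^-$ with $\bar W \cap (U\cup V) = \emptyset$. The $\pi$-convergence consequence just described yields some $N$ so that $h^i(K)\subset U$ for $i\ge N$ and $h^i(K)\subset V$ for $i\le -N$. In either case $h^i(K)\cap K = \emptyset$, so only finitely many $i\in\Z$ satisfy $h^i(K)\cap K\neq\emptyset$, which is the desired proper discontinuity of $\langle h\rangle$ on $\bd X-\{h^\pm\}$.

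For density, assume $\bd X-\{h^\pm\}\neq\emptyset$ and pick any $w$ in it. Since $h$ is a homeomorphism of $\bd X$ fixing $\{h^\pm\}$ pointwise, every iterate $h^i(w)$ lies in $\bd X-\{h^\pm\}$. Applying the $\pi$-convergence statement above to the compact set $K=\{w\}$ gives $h^i(w)\to h^+$ as $i\to+\infty$ and $h^i(w)\to h^-$ as $i\to-\infty$, so $\bd X-\{h^\pm\}$ accumulates at both $h^+$ and $h^-$. Combined with openness, this makes $\bd X-\{h^\pm\}$ dense in $\bd X$.

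The only real obstacle is the initial observation: one must notice that the rank~$1$ assumption collapses every closed Tits ball of finite radius around $h^\pm$ to a point, so that the limiting set $\bar B_T(p,\pi-\theta)$ appearing in Theorem \ref{T:pi} reduces to $\{h^+\}$. Once this is recognized, proper discontinuity and density both fall out of one application of $\pi$-convergence to $K$ and to $\{w\}$ respectively, with no further CAT(0) geometry needed beyond what is already encoded in Theorem \ref{T:pi}.
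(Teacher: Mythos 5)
Your proposal is correct and follows essentially the same route as the paper: both use the rank~$1$ fact that $d_T(h^\pm,\alpha)=\infty$ for $\alpha\neq h^\pm$ to collapse the limit sets in $\pi$-convergence to the single points $h^\pm$, deduce proper discontinuity from $h^{\pm i}(K)\to h^\pm$, and get density from the orbit of any single point accumulating at $h^+$ and $h^-$. You merely spell out the Hausdorff separation step that the paper leaves implicit.
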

\begin{proof}  Let $K$ be a compact subset of $\bd X - \{h^{\pm }\}$.  The Tits distance from $K$ to $n =h^-$ will be infinite.  Thus by $\pi$-convergence $h^i(K) \to p= h^+$ and $h^{-i}(K) \to n = h^{-}$.  Thus $\{i \in \Z: h^i(K ) \cap K \neq \emptyset\}$ is finite and the action of $\langle h\rangle$ on $\bd X-\{h^{\pm}\}$ is properly discontinuous.  Clearly $\bd X-\{h^{\pm}\}$ is an open subset and for any $a \in \bd X-\{h^{\pm}\}$, $h^i(a) \to h^+$ and $h^{-i} (a) \to h^{-}$ so $\bd X-\{h^{\pm}\}$ is dense in $\bd X$.
\end{proof}

\begin{Def} For $X$ a complete CAT(0) space we define the limit set 
$\Lambda X \subset \bd X$ to be the set $\{p \in \bd X: \exists (g_i)$ isometries of $X$ with $g_i(x) \to p$ and $g_i^{-1}(x) \to n$ for some $n \in \bd X$ and  for all $x \in X \}$
\end{Def}

The following is a slight generalization of a result of Karlsson \cite{KAR}.
\begin{Thm} Let $X$ be a complete CAT(0) space with $\radius_T(\bd X) > 3\pi$ and $|\bd X|>2$. If $h$ is a hyperbolic isometry of $X$ then $\langle h\rangle$ acts properly discontinuously on the open subset $\Omega= \bd X -[\bar B_T(h^+, \frac \pi 2) \cup \bar B_T(h^{-}, \frac \pi 2)]$. If $[\bar B_T(h^+, \frac \pi 2) \cup \bar B_T(h^{-}, \frac \pi 2)]\subset \Lambda X$ then $\Omega $ is dense in $\bd X$.
\end{Thm}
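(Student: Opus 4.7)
My plan is to combine the $\pi$-convergence theorem (Theorem~\ref{T:pi}), lower semi-continuity of $d_T$ with respect to the cone topology, and the large Tits-radius hypothesis. First, both $\bar B_T(h^\pm, \pi/2)$ are cone-closed: if $v_n \to v$ cone-wise with $d_T(h^\pm, v_n) \le \pi/2$, lower semi-continuity gives $d_T(h^\pm, v) \le \varliminf d_T(h^\pm, v_n) \le \pi/2$. Hence $\Omega$ is cone-open. For a cone-compact $K \subset \Omega$, since $K \cap \bar B_T(h^-, \pi/2) = \emptyset$, Theorem~\ref{T:pi} applied to $(h^i)$ with $n = h^-$, $p = h^+$, $\theta = \pi/2$ gives $h^i(K) \to \bar B_T(h^+, \pi/2)$. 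Regularity of the cone topology separates the disjoint compact $K$ from the cone-closed $\bar B_T(h^+, \pi/2)$ by disjoint cone-open sets, so $h^i(K) \cap K = \emptyset$ for all large $i$; applying this to $(h^{-i})$ as well completes the proof of proper discontinuity.

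For density, fix $v \in \bar B_T(h^+, \pi/2) \cup \bar B_T(h^-, \pi/2) \subset \Lambda X$ with $v \notin \{h^+, h^-\}$ (the main case), and choose isometries $(g_i)$ with $g_i(x) \to v$, $g_i^{-1}(x) \to n$. The radius hypothesis supplies $w \in \bd X$ with $d_T(w, n) > 3\pi$, and Theorem~\ref{T:pi} with $\theta = \pi$ gives $g_i(w) \to v$. To see that $g_i(w) \in \Omega$ for large $i$, apply Theorem~\ref{T:pi} to $(g_i^{-1})$ on the compact set $\{h^\pm\}$, which is legitimate because $d_T(v, h^\pm) > 0$ (if $v \in \bar B_T(h^+, \pi/2)$ then $d_T(v, h^-) \ge d_T(h^+, h^-) - \pi/2 \ge \pi/2$ by $d_T(h^+, h^-) \ge \pi$, and symmetrically). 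This forces $g_i^{-1}(h^\pm)$ into every cone-open neighborhood of $\bar B_T(n, \pi - d_T(v, h^\pm))$. Since the set $\{z \in \bd X : d_T(w, z) > 2\pi\}$ is cone-open by lower semi-continuity and contains $\bar B_T(n, \pi - d_T(v, h^\pm))$ via the triangle inequality (using $d_T(w, n) > 3\pi$), we get $d_T(g_i(w), h^\pm) = d_T(w, g_i^{-1}(h^\pm)) > 2\pi$ eventually, so $g_i(w) \in \Omega$ and $v \in \overline{\Omega}$.

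The delicate case, and the main obstacle, is $v \in \{h^+, h^-\}$, where $d_T(v, h^\pm) = 0$ collapses the above bound on $g_i^{-1}(h^\pm)$. I would split by rank: if $h$ is rank 1 then $d_T(h^+, \alpha) = \iy$ for all $\alpha \neq h^+$, so $\bar B_T(h^\pm, \pi/2) = \{h^\pm\}$ and density of $\Omega = \bd X \setminus \{h^\pm\}$ follows from Theorem~\ref{T:Ball}. Otherwise $h$ has an axis bounding a flat half-plane, forcing $d_T(h^+, h^-) = \pi$; a short covering-bound using $\radius_T(\bd X) > 3\pi$ then rules out $\bd X \subset \bar B_T(h^-, \pi) \cup \bar B_T(h^+, \pi/2)$ (which would yield $\bd X \subset \bar B_T(h^-, \pi/2 + d_T(h^+, h^-))$ and force $d_T(h^+, h^-) > 5\pi/2$, contradicting $= \pi$) and so produces $w \in \bd X$ with $d_T(w, h^-) > \pi$ and $d_T(w, h^+) > \pi/2$. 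Since $h$ fixes $h^\pm$ and is a Tits isometry, $d_T(h^i(w), h^\pm) = d_T(w, h^\pm)$, so $h^i(w) \in \Omega$ for every $i$; Theorem~\ref{T:pi} then yields $h^i(w) \to h^+$, whence $h^+ \in \overline{\Omega}$ (and $h^- \in \overline{\Omega}$ by the symmetric argument).
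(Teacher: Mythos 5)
Your proposal is correct. The proper-discontinuity half (openness of $\Omega$ from lower semi-continuity of $d_T$, then $\pi$-convergence with $\theta=\frac\pi2$ plus a regularity-and-compactness separation of $K$ from $\bar B_T(h^\pm,\frac\pi2)$) is exactly the paper's argument. The density half takes a genuinely different route. The paper never approximates individual points of $\bar B_T(h^+,\frac\pi2)\cup\bar B_T(h^-,\frac\pi2)$: it shows that every neighborhood $W$ of a point of $\Lambda X$ satisfies $\diam_T(W)>2\pi$ (by pushing two points $u,v$ with $d_T(u,v)>2\pi$ into $W$ via $\pi$-convergence with $\theta=\pi$), while the excluded set has Tits diameter at most $2\pi$ once $d_T(h^+,h^-)=\pi$, so $W$ must meet $\Omega$; this handles $h^\pm$ with no case split, but it rests on the existence, for each $q\in\bd X$, of $u,v$ with $d_T(q,u),d_T(q,v)\ge\pi$ and $d_T(u,v)>2\pi$, which the paper asserts with only a nod to the triangle inequality. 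You instead exhibit, for each point $v$ of the excluded set, an explicit sequence in $\Omega$ converging to $v$: a single far point $w$ with $d_T(w,n)>3\pi$ is pushed toward $v$, and $d_T(g_i(w),h^\pm)=d_T(w,g_i^{-1}(h^\pm))$ is controlled by applying $\pi$-convergence to $(g_i^{-1})$; the degenerate case $v=h^\pm$ is handled by Theorem \ref{T:Ball} (rank $1$) or by an $\langle h\rangle$-orbit lying in $\Omega$ and converging to $h^+$. Your version is longer and needs the case analysis at $h^\pm$, but it is more constructive and uses the radius hypothesis only in the directly available form ``some point lies at Tits distance greater than $3\pi$ from a given point.'' Two points of polish: when applying Theorem \ref{T:pi} to $(g_i^{-1})$ on $\{h^+,h^-\}$ take $\theta$ strictly less than $\min\{d_T(v,h^+),d_T(v,h^-)\}$ (or just $\theta=0$, which already lands the images in any open set containing $\bar B_T(n,\pi)$, all you need); and separating the compact $K$ from the closed ball requires regularity together with compactness of $K$, not regularity alone.
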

\begin{proof} By Theorem \ref{T:Ball} we may assume $h$ is not rank 1. Thus some axis of $h$ bounds a half-flat, which corresponds to a Tits geodesic of length $\pi$ from $h^+$ to $h^-$ so $d_T(h^+,h^-) =\pi$.

If there is any point of $w \in \bd X$ which is isolated in the Tits metric, ($d_T(w, v)= \iy, \, \forall v $) then by $\pi$-convergence the orbit of $w$ under $\langle h \rangle$ is infinite and each element of the orbit will be isolated as well.  

By the triangle inequality, for any $q\in \bd X$, there exists $u,v \in \bd X$ such that 
$d_T(q,u),d_T(q,v)\ge \pi$ and $ d_T(u,v) >2\pi$. If follows from $\pi$-convergence that for any $w \in \Lambda X$ and any neighborhood $W$ of $w$ in $\bd X$, the Tits diameter $\diam_T(W) > 2\pi$. 

Since $\diam_T \left[ \bar B_T(h^+, \frac \pi 2) \cup \bar B_T(h^-, \frac \pi 2)\right]\le 2\pi$, $ W \not \subset \bar B_T(h^+, \frac \pi 2) \cup \bar B_T(h^-, \frac \pi 2)$ so 
$W\cap \Omega \neq \emptyset$.  Thus $\Omega$ is dense in $\Lambda X$.. Since closed Tits balls are closed in $\bd X$, $\Omega$ is open.

For any compact $K \subset \Omega$, by $\pi$-convergence
$h^i(K) \to \bar B_T(h^+, \frac \pi 2)$ and $h^{-i}(K) \to \bar B_T(h^{-}, \frac \pi 2)$.  Using this, we can show that  $\{i \in \Z: h^i(K ) \cap K \neq \emptyset\}$ is finite and so the action of $\langle h\rangle$ on  $\Omega$ is properly discontinuous.
\end{proof}
\begin{Ex}
Notice that 
$\Omega$ need not be dense in $\bd X$ if we remove the condition that $\bar B_T(h^\iy, \frac \pi 2) \cup  \bar B_T(h^{-\iy}, \frac \pi 2) \subset \Lambda X$.  You start with the half plane $\{(x,y) : y \ge 0\}$ where $h$ is unit translation in the 1st coordinate.  Now you attach a line at the origin and let $g$ act by translation in that line.  You construct the CAT(0) space $X$ by translating this picture by $g$ and $h$.  This gives you an action of $<g,h>$ on the CAT(0) space $X$.  The isometry $g$ is a rank 1 hyperbolic element, so $X$ is rank 1, but $\Omega= \bd X - [\bar B_T(h^\iy, \frac  \pi 2) \cup  \bar B_T(h^{-\iy}, \frac \pi 2)]$ is not dense in the boundary, in fact its closure hits $ [\bar B_T(h^\iy, \frac \pi 2) \cup  \bar B_T(h^{-\iy}, \frac \pi 2)]$ only in $h^{\pm \iy}$.
\end{Ex}
\begin{Thm} Let $X$ be a proper CAT(0) space with $\radius_T (\bd X)>3\pi$, and $h$ a parabolic isometry of $X$. There exists $m\in \bd X$ fixed point of $h$ such that $\langle h \rangle$ acts properly discontinuously on the open dense subset $\Omega = \bd X - \bar B_T(m,\pi)$ of $\bd X$.
If $\bar B_T(m, \pi) \subset \Lambda X$ then $\Omega $ is dense in $\bd X$.
\end{Thm}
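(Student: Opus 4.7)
The plan is to identify a fixed point $m \in \bd X$ of $h$ for which $h$ preserves every horoball centered at $m$; once such an $m$ is in hand, the proof parallels the hyperbolic case via Theorem~\ref{T:pi}, now with the pair $p,n$ confined to $\bar B_T(m,\pi/2)$ in place of the axis endpoints $h^{\pm}$.

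My first step is to locate $m$ using a displacement-minimizing sequence. Take $x_n \in X$ with $d(x_n,h(x_n)) \to |h| := \inf_y d(y,h(y))$. Since $h$ is parabolic the infimum is not attained, so $(x_n)$ leaves every compact set; by properness, a subsequence converges to some $m \in \bd X$. The standard CAT(0) fact that two unbounded sequences at bounded distance from one another share the same boundary limit gives $h(x_n) \to m$, hence $h(m)=m$. The technical heart is to argue that the Busemann translation constant $c := b_m(h(y)) - b_m(y)$ (constant in $y$) vanishes, so that each horoball at $m$ is $h$-invariant; intuitively, a nonzero $c$ would allow one to decrease $d(x_n,h(x_n))$ by flowing $x_n$ along the Busemann gradient at $m$, contradicting approximate minimality. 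I expect this verification to be the main obstacle of the proof.

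Given such an $m$, Lemma~\ref{L:horo} does the geometric work: every orbit $\{h^i(y)\}_{i \in \Z}$ is confined to a single horoball $H$ centered at $m$, so every boundary accumulation point of the orbit lies in $\bar H \cap \bd X \subset \bar B_T(m, \pi/2)$. For proper discontinuity on $\Omega = \bd X - \bar B_T(m,\pi)$, I would take a compact $K \subset \Omega$ and argue by contradiction: if $\{i \in \Z : h^i(K) \cap K \ne \emptyset\}$ is infinite, extract $n_i \to +\infty$ (the other case is symmetric), and pass to a further subsequence so that $h^{n_i}(x) \to p$ and $h^{-n_i}(x) \to n$ for every $x \in X$. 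The previous paragraph places $p,n \in \bar B_T(m, \pi/2)$. For any $q \in K$ the Tits triangle inequality gives $d_T(q,n) \ge d_T(q,m) - d_T(n,m) > \pi - \pi/2 = \pi/2$, so $K \subset \bd X - \bar B_T(n, \pi/2)$. Applying Theorem~\ref{T:pi} with $\theta = \pi/2$ places $h^{n_i}(K)$ eventually inside any open neighborhood of $\bar B_T(p, \pi/2)$; since $\bar B_T(p, \pi/2) \subset \bar B_T(m,\pi) \subset \bd X - K$, taking the neighborhood to be $\bd X - K$ yields $h^{n_i}(K) \cap K = \emptyset$ for large $i$, the required contradiction.

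Finally, $\Omega$ is open because closed Tits balls are closed in $\bd X$. Assuming $\bar B_T(m,\pi) \subset \Lambda X$, the density argument from the preceding theorem transplants essentially verbatim: $\radius_T(\bd X) > 3\pi$ together with $\pi$-convergence forces every $\bd X$-neighborhood $W$ of any $w \in \Lambda X$ to satisfy $\diam_T(W) > 2\pi$, and since $\diam_T \bar B_T(m,\pi) \le 2\pi$ no such $W$ can be contained in $\bar B_T(m,\pi)$, giving $W \cap \Omega \ne \emptyset$; combined with $\bd X - \Lambda X \subset \Omega$, this produces density of $\Omega$ in $\bd X$.
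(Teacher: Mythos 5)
Your second and third paragraphs reproduce the paper's argument for proper discontinuity and for density almost exactly: the paper likewise puts the two accumulation points $n,p$ of the orbit into $\bar B_T(m,\frac\pi2)$ via Lemma \ref{L:horo} applied to an invariant horoball, applies $\pi$-convergence with $\theta=\frac\pi2$ to push $h^{i_j}(K)$ into a neighborhood of $\bar B_T(p,\frac\pi2)\subset \bar B_T(m,\pi)$ disjoint from $K$, and runs the same $\diam_T>2\pi$ argument for density. That part is fine.

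The genuine gap is in your first step, the construction of $m$. The paper does not construct $m$ from a displacement-minimizing sequence; it simply invokes \cite[II 8.25]{BRI-HAE}, which produces a fixed point $m\in\bd X$ such that \emph{every horoball centered at $m$ is $h$-invariant}, and that invariance is exactly what the rest of the argument consumes. Your replacement is false as stated: a boundary accumulation point of a displacement-minimizing sequence need not have the invariant-horoball property. Take $X=\HH^2\times\R$ and $h=(p,\tau)$ with $p$ parabolic on $\HH^2$ and $\tau$ translation by $1$ on $\R$; then $d\bigl((x,t),h(x,t)\bigr)^2=d(x,px)^2+1$, so $h$ is parabolic with $|h|=1$, and one may choose a minimizing sequence $(x_n,t_n)$ with $d(x_n,px_n)\to 0$ but $t_n\to\iy$ so fast that $(x_n,t_n)$ converges to the suspension point $\xi_+$ of the $\R$-factor in $\bd X$. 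That point is fixed by $h$, but $b_{\xi_+}\circ h-b_{\xi_+}\equiv -1$, so $h$ permutes rather than preserves the horoballs at $\xi_+$, and the backward orbit $h^{-j}(x)$ exits every horoball at $\xi_+$ -- so you cannot place $n$ in $\bar B_T(\xi_+,\frac\pi2)$ and the subsequent application of $\pi$-convergence breaks down. The same example defeats your proposed repair: the displacement function is constant along the rays toward $\xi_+$ (it does not depend on $t$), so ``flowing along the Busemann gradient'' produces no decrease and no contradiction to approximate minimality. The correct point here is $\xi_p$ on the equator of the join, which is what the construction behind \cite[II 8.25]{BRI-HAE} (via the sublevel sets $\{d_h\le |h|+\epsilon\}$ and their boundaries at infinity) actually yields. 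So either cite that result, as the paper does, or supply its proof; the step you flagged as ``the main obstacle'' is not merely unfinished but is approached in a way that cannot be completed.
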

\begin{proof}
Clearly $\Omega$ is non-empty and open.  By \cite[II 8.25]{BRI-HAE}, there exists $m \in \bd X$ such that $h$ leaves invariant each horoball centered at $m$ which implies that $h$ fixes $m$.  Suppose that  $\langle h \rangle$  doesn't act properly discontinuously on $\Omega$.  Then there exists $K $, a compact subset of $\Omega$, and a strictly increasing sequence $(i_j)\subset \N$ such that $K \cap h^{i_j} (K)\neq \emptyset$ for all $j \in \N$.  

Fix $ x \in X$. Passing to a subsequence we may assume that $h^{i_j} (x) \to p\in \bd X$ and $h^{-i_j}(x) \to n\in \bd X$.  Since $h$ leaves the horosphere $S$ centered at $m$ passing through $x$ invariant, then $n,p \in \bd S \subset \bar B_T(m, \frac \pi 2)$ by Lemma \ref{L:horo}.  Since $\bd X$ is Hausdorf, there exists  $U\subset \bd X$ open with $\bar B_T(m, \pi) \subset U$ and $U \cap K= \emptyset$.  Since $\bar B_T(n, \frac \pi 2), \bar B_T(p, \frac \pi 2) \subset \bar B_T(m, \pi) \subset U$, by $\pi$-convergence $h^{i_j}(K) \subset U$ for all $j>>0$ which implies $h^{i_j}(K) \cap K = \emptyset$ for all $j>>0$ contradicting the choice of $(i_j)$. Thus $\langle h\rangle $ acts properly on $\Omega$.

By the triangle inequality, for any $q\in \bd X$, there exists $u,v \in \bd X$ such that 
$d_T(q,u),d_T(q,v)\ge \pi$ and $ d_T(u,v) >2\pi$. If follows from $\pi$-convergence that for any $w \in \Lambda X$ and any neighborhood $W$ of $w$ in $\bd X$, the Tits diameter $\diam_T(W) > 2\pi$. 

Since $\diam_T (\bar B_T(m, \pi))\le 2\pi$, $ W \not \subset\bar B_T(m, \pi)$ so 
$W\cap \Omega \neq \emptyset$.  Thus $\Omega$ is dense in $\Lambda X$.
\end{proof}

\section{Small Tits Radius}
\begin{Def}Recall that a metric space is {\em proper} if closed metric balls are compact, and a metric space is {\em cocompact} if the quotient of the space by the isometry group is compact.
\end{Def}
\subsection*{We now assume that $X$ is a proper CAT(0) space.}

\begin{Def}We say that $a,b \in \bd X$ are antipodes if $d_T(a,b) \ge \pi$.    The suspension of the antipodes $a$ and $b$,   $S_a^b$, is the union of $a$ and $b$ together with of all Tits geodesics from $a$ to $b$ of length $\pi$ (if any). Notice that $S_a^b =S_b^a$.
\end{Def}
 Let $a \in \bd X$ and $(g_i)$ a sequence of isometries of a CAT(0) space $X$.  We say that $(g_i)$ {\em pulls from} $n$ if there is some unit speed geodesic ray $R:[0,\iy) \to X$ representing $n$, and a sequence  $(s_i) \subset [0,\iy)$ with $s_i \to \iy$ such that  the sequence $(g_i(R(s_i)))$ is bounded.  
Clearly this is independent of the ray chosen.   Passing to a subsequence, we may assume that $g_i(R(s_i)) \to b$, and that
the sequence of rays $(g_i(R))$  (each reparametrized) converges uniformly on compact subsets to a geodesic line $L$ with $g_i(-i) \to L(-\iy)$, and $L(0)= b$.  Notice that for some (and so any) $x \in X$,  $g_i(x) \to L(-\iy)$ and 
$g_i^{-1}(x) \to n$.

 For any $x \in \bd X$, passing to a subsequence, we may assume that  $g_i(x) \to\hat x \in \bd X$.   Thus for any compact set $C \subset TX$ (where $TX$ is $\bd X$ with the topology of the Tits metric)   we can define $f:C \to \bd X$ by $f(x)= \hat x$.  In fact we can define $f$ whenever $C$ has a countable dense subset as a subset of $TX$.   By \cite{BRI-HAE} $f:C \to TX$ is Lipschitz with constant one.  Notice that $f(n) = L(-\iy)$.

  \begin{Lem}\label{L:f} In the above setting  for any $a \in \bar B_T(n,\pi)$, $f:[n,a]\to TX$ is an isometric embeding of $[n,a]$ into $S_{L(-\iy)}^{L(\iy)}$.
\end{Lem}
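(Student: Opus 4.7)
The plan is to combine the $1$-Lipschitz property of $f$ with the $\pi$-convergence Theorem \ref{T:pi} and the identification of $f(n)$ as one of the two endpoints of the line $L$; these three ingredients squeeze two upper bounds into equalities using the Tits triangle inequality.

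First I would show that $f(n) = L(\iy)$. The reparametrized rays $\tilde{R}_i(t) := g_i(R(t+s_i))$ satisfy $\tilde{R}_i(0) \to b = L(0)$ and converge to $L$ uniformly on compact subsets; since each $\tilde{R}_i$ is a geodesic ray whose boundary endpoint is $g_i(R(\iy)) = g_i(n)$, a standard argument in the cone topology on $X \cup \bd X$ (with basepoint $b$) yields $g_i(n) \to L(\iy)$ in $\bd X$. Also, $L$ being a geodesic line in a CAT(0) space gives $d_T(L(-\iy), L(\iy)) = \pi$.

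Next, for $v \in [n,a]$ with $s := d_T(n,v) \in [0,\pi]$, I would obtain two upper bounds. The $1$-Lipschitz property together with $f(n) = L(\iy)$ gives $d_T(L(\iy), f(v)) \le s$. Applying Theorem \ref{T:pi} to $K = \{v\}$ with any $\theta < s$ gives $g_i(v) \in \bar{B}_T(L(-\iy), \pi-\theta)$ eventually; letting $\theta \nearrow s$ yields $d_T(L(-\iy), f(v)) \le \pi - s$. Then
\[
\pi \;=\; d_T(L(-\iy), L(\iy)) \;\le\; d_T(L(-\iy), f(v)) + d_T(f(v), L(\iy)) \;\le\; (\pi - s) + s \;=\; \pi
\]
forces $d_T(L(-\iy), f(v)) = \pi - s$ and $d_T(L(\iy), f(v)) = s$. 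Concatenating the Tits geodesics $[L(-\iy), f(v)]$ and $[f(v), L(\iy)]$ yields a Tits geodesic of length $\pi$ from $L(-\iy)$ to $L(\iy)$ passing through $f(v)$, so $f(v) \in S_{L(-\iy)}^{L(\iy)}$. In particular $d_T(f(n), f(v)) = d_T(n,v)$ for every $v \in [n,a]$, and for any $u, w \in [n,a]$ with $d_T(n,u) \le d_T(n,w)$, the triangle inequality $d_T(f(n), f(u)) + d_T(f(u), f(w)) \ge d_T(f(n), f(w))$ combined with these identities and the $1$-Lipschitz bound forces $d_T(f(u), f(w)) = d_T(u, w)$, completing the isometric embedding.

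The main obstacle is the first step, $f(n) = L(\iy)$: one must upgrade uniform-on-compacts convergence $\tilde{R}_i \to L$ to convergence of the $\bd X$ endpoints $g_i(n) \to L(\iy)$, which requires a careful argument in the cone topology on $X \cup \bd X$. Once this anchor is in place, the conclusion that $f([n,a])$ lands inside $S_{L(-\iy)}^{L(\iy)}$ and is an isometric embedding follows from the tight squeeze between the Lipschitz and the $\pi$-convergence upper bounds, made possible by the antipodal pair $L(\pm\iy)$ sitting at Tits distance exactly $\pi$.
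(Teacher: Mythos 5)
Your proof is correct and follows essentially the same route as the paper: the $1$-Lipschitz bound $d_T(f(n),f(v))\le d_T(n,v)$ and the $\pi$-convergence bound $d_T(f(v),L(-\iy))\le \pi-d_T(n,v)$ are squeezed against $d_T(L(-\iy),L(\iy))=\pi$ via the triangle inequality. Your care in identifying $f(n)$ with $L(\iy)$ (the attracting point of $(g_i)$ being $L(-\iy)$) is worthwhile, since the paper's own proof asserts $f(n)=L(-\iy)$ --- an orientation slip that your version silently corrects without changing the substance of the argument.
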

\begin{proof}  
Because $f$ is Lipschitz with constant one, if $d_T(n,a) = d_T(f(n),f(a))$ then $f:[n,a] \to TX$ is an isometry.   Since $f(n) = L(-\iy)$, it suffices to show that $d_T(f(n), f(a)) + d_T(f(a), L(\iy)) = \pi$.  

 Let $\theta= d_T(n,a)$, so by $\pi$-convergence $\pi-\theta\ge d_T(f(a), L(\iy))$.
Since $f$ is Lipschitz with constant one, $d_T(f(n),f(a)) \le \theta$.  However 
\begin{align*}\pi= \theta + (\pi-\theta) \ge d_T(f(n),f(a))+ d_T(f(a), L(\iy)) \ge  d_T(L(-\iy), L(\iy)) \ge \pi \end{align*}
\end{proof}
\begin{Ex}
The function $f$ defined above need not be an embedding on $B_T(n,\pi)$.
Consider the half flat $Y =\R \times [0,\iy)$.  For each $n \in \Z$   glue the quarter flat $X_n=[n,\iy) \times [0,\iy)$  by identifying $(a,b) \in Y$ with $(c,d)\in X_n$ if $a=c$, $b=d$, and $\ln (a+1-n) \ge b$.  
Let $Z$ be the resulting space.   $\bd Z$ is  broom attached at $\iy$ in the $\R$ factor of $Y$.  Notice that $\Z$ acts on $Z$  ($1 \in \Z$ sends $X_n$ to $X_{n+1}$). Letting $g_i$ be subtraction by $i$, we have that $f(\bd Z) =\bd Y$.  
\end{Ex}
\begin{Def} Let $g$ be a hyperbolic isometry of the CAT(0) space $X$.  We define
$S_g = S^{g^+}_{g^{-}}$, that is the suspension of the endpoints of $g$.
\end{Def}

\begin{Lem}\label{L:discrete} The group $\langle g\rangle $ acts discretely on the open set $\Omega =\bd X- S_g$.
\end{Lem}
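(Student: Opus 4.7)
The plan is to argue by contradiction, using $\pi$-convergence (Theorem \ref{T:pi}) together with Lemma \ref{L:f} to force any accumulation point of an orbit in $\Omega$ to actually lie in $S_g$. Suppose some $x\in\Omega$ has an orbit that accumulates in $\Omega$: we have a subsequence with $g^{i_j}(x)\to y$ in the visual topology, $|i_j|\to\iy$, and $y\in\Omega$. Replacing $g$ by $g^{-1}$ if necessary, I may assume $i_j\to+\iy$.

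If $d_T(x,g^-)>\pi$ (in particular if $d_T(x,g^-)=\iy$), then $\{x\}$ is a compact subset of $\bd X-\bar B_T(g^-,\pi)$. Applying Theorem \ref{T:pi} with $n=g^-$, $p=g^+$ and $\theta=\pi$ then forces $g^{i_j}(x)$ to converge visually into $\bar B_T(g^+,0)=\{g^+\}$, so $y=g^+\in S_g$, contradicting $y\in\Omega$.

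If instead $d_T(x,g^-)\le\pi$, I apply Lemma \ref{L:f} to the sequence $h_j:=g^{i_j}$. Take $R:[0,\iy)\to X$ to be the axis of $g$ parametrized at unit speed toward $g^-$ and let $\tau>0$ be the translation length of $g$; then $h_j(R(i_j\tau))=R(0)$ for every $j$, so $(h_j)$ pulls from $g^-$. The limit line $L$ in the pulls-from setup is then the axis of $g$ itself; since $h_j(x_0)\to g^+$ for $x_0\in R$ we have $L(-\iy)=g^+$, and the other end of the axis gives $L(\iy)=g^-$, so $S_{L(-\iy)}^{L(\iy)}=S_g$. Lemma \ref{L:f} now yields a further subsequence along which $h_j(x)\to f(x)\in S_g$, and uniqueness of visual limits forces $y=f(x)\in S_g$, again contradicting $y\in\Omega$.

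The only real delicacy is the pulls-from bookkeeping: one must verify that the limit line $L$ produced by Lemma \ref{L:f}'s construction really is the axis of $g$ and that its endpoints are correctly matched with $g^\pm$, so that $S_{L(-\iy)}^{L(\iy)}$ coincides with $S_g$. Once this identification is in place, the two cases above together show that no orbit accumulates anywhere in $\Omega$, i.e., $\langle g\rangle$ acts discretely on $\Omega$.
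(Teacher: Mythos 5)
Your argument is correct and follows essentially the same route as the paper: split on whether $d_T(x,g^-)$ exceeds $\pi$, use $\pi$-convergence with $n=g^-$, $p=g^+$ in the first case, and in the second apply Lemma \ref{L:f} to the sequence $(g^{i_j})$ pulling from $g^-$ along the axis, identifying the limit line with the axis so that $S_{L(-\iy)}^{L(\iy)}=S_g$. Your extra care in matching $L(\pm\iy)$ with $g^{\mp}$ is exactly the bookkeeping the paper leaves implicit, and the reduction to $i_j\to+\iy$ via $S_{g^{-1}}=S_g$ is the standard symmetry the paper also uses.
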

\begin{proof} It suffices to show that for any point $a \in \bd X$, the limit points of the $\{g^i(a): \, i\in \N \}$ are in $S_g$. Let $b$ be a limit point of $\{g^i(a): \, i\in \N \}$.  Choose an increasing sequence $(i_j) \subset \N$ such that $g^{i_j}(a) \to b$. 

  Let $L$ be an axis of $g$, then the sequence $(g^{i_j})$ pulls from $n = L(-\iy)= g^-$, leaving $L$ invariant.  If $d_T(a,n) \ge \pi$ then by $\pi$-convergence $b = L(\iy)=g^+$.  If $d_T(a,n) < \pi$ then by 
  we construct  the function $f:[n,a] \to TX$ as above and so $f(a) = b$.   
  
 By  Lemma \ref{L:f}  $f$ embeds $[n,a]$ into $S_{L(-\iy)}^{L(\iy)}= S_g$.  Thus $b \in S_g$.
\end{proof}

\begin{Def} An bounded metric space $X$ is almost a product($K$) of $Y$ and 
$Z$ if there is a convex subset $W\cong Y \times Z$ and $X \subset \nb(W, K)$.
\end{Def}

\begin{Lem}\label{L:limprod} Let $X$ be a  proper cocompact CAT(0) space  $X$.  If
there is a convex subset $W$ of $X$ where $W = Y\times Z$, and a non-empty open subset $V \subset \bd X$ with $ V \subset \bd W$ then $X$ is almost a product of $\hat Y$ and $\hat Z$, convex subsets of $X$.  Additionally if $Y$ is cocompact  then  $\hat Y \cong Y$.
\end{Lem}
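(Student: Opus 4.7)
The plan is to use the openness of $V$ in $\bd X$ together with cocompactness to propagate the product structure of $W$ to a convex subset $\hat W$ of $X$ at bounded distance from every point of $X$. Cocompactness gives an isometry group $\Gamma$ acting with compact quotient; fixing a compact $D \subset X$ meeting $W$ with $\Gamma \cdot D = X$, for any $x \in X$ there is $\gamma \in \Gamma$ with $\gamma(x) \in D$, so $x$ lies within a uniform distance $K_0$ of $\gamma^{-1}(W)$. Hence $X \subset \bigcup_{\gamma} \nb(\gamma(W), K_0)$.

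The heart of the matter is to organize these translates of $W$ into a single convex set carrying a coherent product structure. For each $\gamma$ appearing in the covering, $\gamma(V) \subset \bd \gamma(W)$ is open in $\bd X$, and the Tits boundary $\bd \gamma(W) = \gamma(\bd Y) \ast \gamma(\bd Z)$ inherits $\gamma$'s product structure. The openness of $V \subset \bd X$ ensures that the boundary data $\bd W$ and $\bd \gamma(W)$ overlap nontrivially whenever $\gamma(W)$ meets a neighborhood of $W$; I would use this together with the spherical-join decompositions and cocompactness to align the two product structures, concluding that $\gamma(W)$ is parallel to $W$ in a product-respecting manner. Taking $\hat W$ to be the union of all such ``parallel copies'' of $W$ yields a convex subset of $X$ with a product structure $\hat W \cong \hat Y \times \hat Z$ (with $\hat Y \supseteq Y$ and $\hat Z \supseteq Z$ convex in $X$), and by construction $X \subset \nb(\hat W, K)$ for a uniform $K$. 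If $Y$ is cocompact, then any convex extension of $Y$ in $X$ at bounded Hausdorff distance from $Y$ must coincide with $Y$, so $\hat Y \cong Y$.

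The main obstacle I anticipate is the alignment step: without further structure it is false that two convex sets in a CAT(0) space sharing an open set of boundary must be at bounded Hausdorff distance (e.g.\ in $\R^2$ the upper half-plane and the region above a parabola share an open boundary arc but have infinite Hausdorff distance). The proof must therefore exploit both the product decomposition $W = Y \times Z$ and the cocompactness of $X$ to rule out such pathologies, most likely by using the join structure of $\bd W$ to lift compatible parametrizations of $W$ and $\gamma(W)$ and then promoting pointwise boundary agreement to bounded distance via Flat Strip--type rigidity.
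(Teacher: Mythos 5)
There is a genuine gap: the two steps that carry all the difficulty are deferred rather than proved. Your plan is to cover $X$ by uniform neighborhoods of the translates $\gamma(W)$ and then ``align'' the product structures of overlapping translates, but for a generic $\gamma$ in the cocompact group there is no reason for $\bd\,\gamma(W)$ to meet $\bd W$ at all, even when $\gamma(W)$ passes close to $W$ (two transverse lines in $\R^2$ are close near their intersection but share no boundary points), so the ``overlap of boundary data'' that your alignment step rests on is usually absent. You then correctly observe that even when two convex sets do share an open set of boundary they need not be at bounded Hausdorff distance, which is exactly the obstruction --- but the appeal to ``Flat Strip--type rigidity'' does not resolve it, and no mechanism is given that actually uses the openness of $V$. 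The paper's route is different and is worth contrasting: it does not compare $W$ with its translates at all. It fixes $\alpha\in V$, a ray $R$ from a basepoint $w\in W$ to $\alpha$, and a cone neighborhood $U(\alpha,N,\epsilon)\subset V$; it uses cocompactness to extract $g_i$ with $g_i(R_{t_i})$ converging to a line, takes $\hat W=\lim g_i(W)$ (with the product slices converging to $\hat Y$, $\hat Z$, and the Pythagorean identity surviving the limit), and then invokes the Geoghegan--Ontaneda theorem that a proper cocompact CAT(0) space has almost extendable($C$) geodesics. If some $x$ had $d(x,\hat W)>C$, then $g_i^{-1}(\bar B(x,C))$ converges to $\alpha$, hence lies in the cone on $U(\alpha,N,\epsilon)$ for large $i$; extendability produces a ray $S$ from $w$ through a point of that set, so $S(\iy)\in U(\alpha,N,\epsilon)\subset\bd W$, yet $S$ leaves $W$ --- contradicting the fact that points of $\bd W$ are represented by rays from $w$ inside the convex set $W$. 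This is the step where the openness of $V$ is cashed in, and it has no counterpart in your proposal.

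The final assertion also fails as you state it: a convex set at bounded Hausdorff distance from a cocompact $Y$ need not coincide with $Y$ (take $Y=\R\times\{0\}$ inside $\R^2$ and the strip $\R\times[0,1]$). The paper instead uses cocompactness of $Y$ to precompose the sequence $g_i$ with isometries $h_i$ of $Y$ so that $g_i(h_i(y))$ stays bounded, obtaining in the limit an isometric embedding $h:Y\to X$ with $h(Y)=\hat Y$.
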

\begin{proof} By \cite{GEA-ONT} $X$ has almost extendable($C$) geodesics for some $C$.  
That is for any $x,y \in X$ there exists $z \in B(y,C)$ such that the geodesic segment $[x,z]$ extends to a ray from $x$.    

We may assume that $Y$ and $Z$ are (convex) subsets of $W$.  Fix a base point $w \in W$.
Let $\alpha \in V$.  Let $R:[0,\iy) \to W$ be a geodesic ray with $R(0) =w$ and $R(\iy) = \alpha$.  There exists $N \in \N$, $\epsilon >0$ such that $U(\alpha, N,\epsilon)\subset V$.

Using cocompactness, we find $(g_i)\subset G$ and $t_i\to \iy$ such that
$g_i(R_{t_i}) \to L$ uniformly on compact subset  where $R_{t_i}$ is the ray $R$ reparametrize to have domain $[-t_i, \iy)$.  Using the fact that $X$ is proper and passing to a subsequence, there are unbounded sequences $(y_i) \subset Y$, $(z_i) \subset Z$ so that $g_i(w_i) \to \hat w \in X$ where $w_i=(y_i,z_i)$.  Using properness and passing to a subsequence we have that $g_i(W) \to \hat W$, a convex subset of $X$.   Let $Y_i$ be the copy of $Y$ in $W$ which contains $z_i$, and $Z_i$ the copy of $Z$ in $W$ which contains $y_i$, so that $Y_i \cap Z_i = \{w_i\}$.  Passing to a subsequence, we may assume  that $g_i(Y_i) \to \hat Y$, a convex subset of $\hat W$ and $g_i(Z_i) \to \hat Z$, a convex subset of $\hat W$.   Now let $\hat a, \hat b \in \hat W$.  Thus there are sequences $(a_i), (b_i) \subset W$ with $g_i(a_i) \to \hat a$ and $g_i(b_i) \to \hat b$. 
For each $i$, let  $a_i^Y= \pi_{Y_i}(a_i)$, the projection of $a_i$ in $Y_i$. Define $b_i^Y$, $a_i^Z$ and $b_i^Z$ similarly.  For each $i$,  $W$ is a product of $Y_i$ and $Z_i$ so $d(a_i,b_i)^2 = d(a_i^Y,b_i^Y)^2+d(a_i^Z,b_i^Z)^2$.  The sequence $(g_i(a_i^Y))$ will clearly converge to the to the closest point  projection $\pi_{\hat Y}(\hat a)$. Similarly $g_i(a_i^Z) \to \pi_{\hat Z}(\hat a)$,
$g_i(b_i^Y) \to \pi_{\hat Y}(\hat b)$, and $g_i(b_i^Z) \to \pi_{\hat Z}(\hat b)$.
It follows that 
$$d\left(\hat a,\hat b\right)^2 = d\left(\pi_{\hat Y}(\hat a), \pi_{\hat Y}(\hat b)\right)^2+
d\left(\pi_{\hat Z}(\hat a),\pi_{\hat Z}(\hat b)\right)^2$$

 so  $\hat W = \hat Y\times \hat Z$.  
 
When $Y$ admits a cocompact action we can arrange it so that for some $y \in Y$, and $h_i$ isometry of $Y$, $g_i(h_i(y))$ is bounded and so there is an isometric embedding $h:Y \to X$ such that $h(Y) = \hat Y$, and similarly for $Z$. 

 Now we claim that $X \subset \nb(\hat W, C)  $.  Suppose not, then let $x \in X$ with $d(x,\hat W)> C$.  
 Notice that $g^{-1}_i(\bar B(x, C)) \to \alpha$.  Thus for $i >>0$ $g^{-1}_i(\bar B(x, C)) \subset \tilde U(\alpha ,N,\epsilon)$.  
 
 By almost extendable geodesics($C$) there is a point $v_i \in g^{-1}_i(\bar B(x, C))$ such that $v_i$ is on a ray $S:[0,\iy) \to X$ from our base point $ w$.  Notice that for $i >>0$,  $d(x, g_i(W)) >C$, so in that case $d(v_i, W) > 0$, so $S([0,\iy))\not \subset W$.
  
  Since the (unit speed geodesic) rays based at $ w$ give unique representatives of $\bd X$ and every element of $\bd  W$ is represented by a ray in $W$ from $w$, this implies that $S(\iy) \not \in \bd W$, but $S(\iy) \in U(\alpha ,N,\epsilon)\subset \bd W$ which is a contradiction.

\end{proof}

\begin{Cor} 
Let $X$ be a proper  cocompact CAT(0) space and $W$  a convex subset  with cocompact stabilizer in the isometry group of $X$.  If for some nonempty open $V \subset \bd X$, $V \subset \bd W$, then $X$ lies in a uniform neighborhood of $W$.
\end{Cor}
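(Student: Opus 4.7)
The strategy is to reduce to Lemma \ref{L:limprod} via the trivial product decomposition $W \cong Y \times Z$ with $Y = W$ and $Z = \{z_0\}$ a singleton. The hypothesis on $V$ is exactly what the lemma requires, and the hypothesis that $W$ has cocompact stabilizer in $\text{Isom}(X)$ supplies the cocompact action on $Y = W$ invoked by the ``additionally'' clause of the lemma, so $\hat Y \cong W$.

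Since $Z$ is a point, the limit factor $\hat Z$ produced in the lemma's construction is also a single point, and the almost-product convex subset $\hat W = \hat Y \times \hat Z$ collapses to a convex copy of $W$ in $X$; the conclusion of the lemma becomes $X \subset \nb(\hat Y, K)$ with $\hat Y \cong W$. To upgrade ``isometric copy of $W$'' to ``equal to $W$,'' I would choose the sequence $(g_i)$ in the lemma's proof from $\st(W)$ rather than from the full isometry group of $X$. Cocompactness of the $\st(W)$-action on $W$ still furnishes $g_i$ with $g_i(R(t_i))$ bounded along the ray $R\subset W$ representing $\alpha\in V$, and since each such $g_i$ setwise preserves $W$ we have $g_i(W)=W$ throughout, so the limit $\hat Y$ is literally $W$ as a subset of $X$.

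With $\hat Y = W$, the almost-extendable-geodesics contradiction at the end of the proof of Lemma \ref{L:limprod} applies without modification: any putative $x \in X$ with $d(x,W)>C$ forces a ray $S$ from the base point $w$ whose endpoint lies in $V \subset \bd W$ but whose image is not contained in $W$, violating the uniqueness of ray representatives of boundary points. This yields the uniform constant $K$ with $X \subset \nb(W, K)$. The only step requiring care is verifying that the restricted sequence $(g_i)\subset \st(W)$ can genuinely replace the $(g_i)\subset \text{Isom}(X)$ used in the lemma; this is immediate, since the only role of cocompactness in that proof was to bound $g_i(R(t_i))$, and the cocompact stabilizer hypothesis delivers exactly this while additionally guaranteeing $g_i(W)=W$.
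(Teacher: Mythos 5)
Your proposal is correct and is essentially the paper's own argument: the paper's proof is the single observation that in the proof of Lemma \ref{L:limprod} one may take the sequence $(g_i)$ inside the stabilizer of $W$, forcing $\hat W = W$, after which the almost-extendable-geodesics argument gives $X \subset \nb(W,C)$. Your additional remark about invoking the lemma via the trivial decomposition $W \cong W \times \{z_0\}$ is a harmless formality; the substance is identical.
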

\begin{proof} In the proof of Lemma \ref{L:limprod}, we may choose the sequence $(g_i)$ in the stabilizer of $W$, so $\hat W=W$.
\end{proof}
Unfortunately, the previous result raises more questions than it answers 
\begin{Qu} If $X$ and $Y$ are cocompact proper CAT(0) spaces and $U\subset \bd X$, $V\subset \bd Y$ non-empty
with $U \cong V$, is $\bd X \cong \bd Y$? 
(where $\cong$ is a homeomorphism in cone topology and an isometry in the Tits metric)

What if additionally we have a group $G$ acting geometrically on both $X$ and $Y$?
The answer is clearly yes  if $X$ or $Y$ is rank 1 and the isomorphism $U \cong V$ is $G$-equivariant.  
\end{Qu}
See \cite{CRO-KLE} and \cite{MOO} for related questions.

We now generalize part of a result of Lytchak to our setting.  

\begin{Thm} \cite{LYT}  Let $Z$ be a geodesically complete  finite dimensonal CAT(1) space.
Then $Z$ has a unique decomposition $Z = \SSS^n \ast G_1 \ast \dots \ast G_k \ast Y_1 \ast \dots \ast Y_m$
where $G_j$ is a thick irreducible building  and $Y_j$ is an irreducible (via spherical join) non-building.
\end{Thm}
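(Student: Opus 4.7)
The plan is to follow the classical architecture for join-decomposition theorems in CAT(1) geometry: first extract the maximal spherical factor, then decompose the remainder into irreducible spherical-join factors, and finally sort the irreducible pieces according to whether they carry the combinatorial structure of a thick spherical building. Existence and uniqueness both reduce to showing that each summand is canonically determined by intrinsic Tits-geometric data on $Z$.

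I would begin by identifying the spherical summand. Set $Z_0 \subset Z$ to be the set of points $p$ whose space of directions $\Sigma_p Z$ is already a round sphere of full dimension $n$. Using geodesic completeness together with finite dimensionality, $Z_0$ itself must be a round sphere $\SSS^n$ (allowing $n=-1$ if no such point exists). The standard CAT(1) join criterion --- if $A,B \subset Z$ satisfy $d_T(a,b) = \pi/2$ for all $a \in A$, $b \in B$, then the geodesic hull of $A \cup B$ is isometric to the spherical join $A \ast B$ --- then yields $Z = \SSS^n \ast Z'$, where $Z'$ has no further round-sphere factor. On $Z'$ I would next introduce the relation generated by $a \sim b$ iff $d_T(a,b) \neq \pi/2$; finite dimensionality guarantees the resulting partition terminates, producing a decomposition $Z' = F_1 \ast \cdots \ast F_\ell$ into join-irreducible, geodesically complete CAT(1) pieces.

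For each irreducible factor $F_i$ I would invoke the building-versus-not dichotomy. A thick spherical building structure on a geodesically complete CAT(1) space is detected by examining the branching pattern of $\pi$-geodesics emanating from singular points together with the Coxeter-type combinatorics on $\Sigma_p F_i$; factors in which such an apartment system exists are relabelled $G_j$, and the remaining irreducible pieces become the $Y_j$ of the statement.

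The main obstacle, and the substantive content of Lytchak's theorem, is the uniqueness assertion. This reduces to the rigidity claim that the sphere factor, each thick irreducible building factor, and each irreducible non-building factor are all canonically recoverable from $Z$, and in particular that no thick irreducible building can sneak into an irreducible non-building as a hidden join factor (or vice versa). The hard step is this last rigidity statement, which exploits the Coxeter-diagram classification applied to tangent cones and the impossibility of a spherical join automorphism matching a Coxeter apartment system against a non-building link. I would import this step directly from \cite{LYT} rather than reprove it here.
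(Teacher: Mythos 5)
First, a point of orientation: the paper does not prove this statement at all --- it is quoted from Lytchak \cite{LYT} as background, and the decomposition result the paper actually proves is the weaker Theorem \ref{T:decomp}, which extracts only the sphere factor $S \cong \SSS^n$ (and drops geodesic completeness) by characterizing $S$ as the set of suspension points of $Z$. Your proposal defers both the building/non-building dichotomy and the uniqueness rigidity back to \cite{LYT}, which is the very source of the theorem; as a proof attempt this is circular, even if it is consistent with how the paper itself treats the result.

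Beyond that, the one step you do carry out concretely is wrong. You define the sphere factor as the set $Z_0$ of points $p$ whose space of directions $\Sigma_p Z$ is a round sphere of full dimension. But in $Z = \SSS^n \ast Y$ the space of directions at a point $p$ of the $\SSS^n$ factor is isometric to $\SSS^{n-1} \ast Y$, which is a round sphere only when $Y$ itself is; so your $Z_0$ is typically empty and does not recover $\SSS^n$. The correct intrinsic characterization --- the one the paper uses in proving Theorem \ref{T:decomp} --- is that $a$ lies in the sphere factor iff $Z$ is a spherical suspension with suspension point $a$, i.e.\ $a$ has a (unique) antipode $a'$ and every point of $Z$ lies on a Tits geodesic from $a$ to $a'$, whence $Z = \{a,a'\} \ast W$ by \cite[4.1]{LYT}, and one iterates, using finite dimensionality to terminate. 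Relatedly, your ``join criterion'' is misstated: subsets $A,B$ with $d_T(a,b) = \pi/2$ for all $a \in A$, $b \in B$ span a join \emph{inside} $Z$, but to conclude $Z = A \ast B$ one needs every point of $Z$ to lie on a geodesic from $A$ to $B$, which is exactly what the suspension-point condition supplies and what your $Z_0$ does not.
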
   
In our setting, the Tits boundary $T X$ is not geodesically complete.  Nonetheless, we obtain the following result: If $Z$ is a finite dimensional CAT(1) space then there is a unique decomposition 
$Z = S^n \ast Y$ where $Y$ doesn't have a sphere as a spherical join factor.  

\begin{Lem}\label{L:TG} If $Y$ and $Z$ are CAT(0) spaces and $F$ is a flat sector in $Y \times Z$, then
$\pi_Y(F)$ is a flat sector in $A$.
\end{Lem}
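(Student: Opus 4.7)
I would approach this by analyzing the isometric parametrization of $F$. Let $\phi : S \to F$ be the isometry from a Euclidean sector $S \subset \E^2$ onto $F$, and decompose $\phi = (\phi_Y, \phi_Z)$ with $\phi_Y = \pi_Y \circ \phi$.  Since geodesics in the product $Y \times Z$ are precisely coordinate-wise geodesics traversed at constant speeds, $\phi_Y$ is affine: it sends every straight segment in $S$ to a geodesic segment in $Y$ at constant speed. The isometric embedding condition yields the Pythagorean identity $|s_1-s_2|^2 = d_Y(\phi_Y(s_1),\phi_Y(s_2))^2 + d_Z(\phi_Z(s_1),\phi_Z(s_2))^2$ for all $s_1, s_2 \in S$.

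Next I would show that every triangle in the convex image $\phi_Y(S) = \pi_Y(F)$ is a flat triangle in $Y$. Fix a Euclidean triangle $\Delta(a,b,c) \subset S$; since $\phi$ is an isometry, the Euclidean angle $A_E := \angle_a(b,c)$ equals the product-space angle at $\phi(a)$, which via the spherical-join structure of tangent spaces in a product expands as
$$\cos A_E = c_Y^{ab} c_Y^{ac} \cos \angle_{\phi_Y(a)}(\phi_Y(b),\phi_Y(c)) + c_Z^{ab} c_Z^{ac} \cos \angle_{\phi_Z(a)}(\phi_Z(b),\phi_Z(c)),$$
with $c_Y^{pq},c_Z^{pq}$ the constant $Y$- and $Z$-speeds of $\phi$ along $[p,q]$, satisfying $(c_Y^{pq})^2 + (c_Z^{pq})^2 = 1$. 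A direct law-of-cosines computation, using Pythagoras on every side length, shows that $c_Y^{ab}c_Y^{ac}\cos\bar\angle_Y + c_Z^{ab}c_Z^{ac}\cos\bar\angle_Z$ also equals $\cos A_E$, where $\bar\angle_Y,\bar\angle_Z$ are the Euclidean comparison angles at the corresponding vertices. Subtracting the two expressions gives $c_Y^{ab}c_Y^{ac}(\cos\angle_Y-\cos\bar\angle_Y) + c_Z^{ab}c_Z^{ac}(\cos\angle_Z-\cos\bar\angle_Z) = 0$; each summand is non-negative by the CAT(0) inequalities $\angle_Y \le \bar\angle_Y$ and $\angle_Z \le \bar\angle_Z$, so each vanishes, and the standard flat-triangle lemma then makes $\phi_Y(\Delta)$ a (possibly degenerate) flat triangle in $Y$.

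Combining the affinity of $\phi_Y$ with the flatness of every triangle in its image, the pullback pseudo-distance $\rho_Y(s_1,s_2) := d_Y(\phi_Y(s_1),\phi_Y(s_2))$ on $S$ depends only on $s_1 - s_2 \in \E^2$, and applying this to parallelograms in $S$ yields the parallelogram law for $\rho_Y$; so $\rho_Y$ is the semi-norm associated to a positive semi-definite quadratic form on $\E^2$. Hence $\phi_Y$ factors as $\phi_Y = \iota \circ A$, where $A:\E^2\to\E^n$ is linear ($n \in \{0,1,2\}$) and $\iota:A(S)\to Y$ is an isometric embedding; since $A(S)$ is the linear image of a Euclidean sector, it is a (possibly degenerate) flat sector in $\E^n$, and therefore $\pi_Y(F) = \iota(A(S))$ is a flat sector in $Y$. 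The main obstacle I anticipate is the translation-invariance / parallelogram-law step: though natural from the flat-parallelogram structure inside $S$ and the coordinate-wise geodesic structure of $Y \times Z$, verifying it rigorously requires carefully combining the affinity of $\phi_Y$ with the flat-triangle property established above, and handling the degenerate cases (where some $Y$-speeds vanish and rays collapse to a point) uniformly.
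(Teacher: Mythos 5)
Your proposal is correct, and its skeleton matches the paper's: reduce to showing that the projection of a flat triangle is flat, use the Pythagorean splitting $d^2 = d_Y^2 + d_Z^2$ of the product metric, apply the CAT(0) comparison inequality in each factor, and conclude by a rigidity argument that each comparison deficit vanishes. Where you differ is in the flatness criterion used to set up and cash out that rigidity. The paper avoids angles entirely: it characterizes a flat triangle $abc$ by the elementary ``similar triangles'' condition that the points $e\in[a,b]$ and $f\in[a,c]$ at parameter $t$ satisfy $d(e,f)=t\,d(b,c)$, squares this identity, and compares it term by term with the factorwise inequalities $d(e_i,f_i)\le t\,d(b_i,c_i)$. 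You instead work with Alexandrov angles, the spherical-join structure of the link of a point in a product, and the law of cosines, closing with the Flat Triangle Lemma; this is equivalent but slightly less elementary, and it forces the degenerate-speed caveats you rightly flag when a coefficient $c_Y^{pq}$ vanishes (where the $Y$-angle is undefined), whereas the distance version handles those cases for free. On the other hand, you are more careful than the paper at the final step: the paper simply asserts that flatness of the nested triangles suffices to make the projected sector flat, while your semi-norm/parallelogram-law argument actually assembles the flat triangles into a convex flat subset and accounts for lower-dimensional images. That assembly step, which you correctly single out as the one needing the most care, is a genuine addition rather than a gap, since the translation-invariance of the pullback pseudo-distance does follow from the constant-speed projection of geodesics together with flatness of all triangles in the convex image.
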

\begin{proof}
Since a flat sector is the nested union of flat triangles (and this is sufficiant).
It suffices to show that the projection of a flat triangle is a flat triangle.

Let $abc$ be a triangle in a CAT(0) space.  Consider the following  condition: For $t \in (0,1)$ if $e \in [a,b]$ and $f \in [a,c]$ with $d(a,e) = t d(a,b)$ and $ d(a,f) =t d(a,c)$ then $d(e,f) = t d(b,c)$.  
Notice that if $abc$ is flat then this condition is satisfied by similar triangles.  If on the the other hand this condition is satisfied, then by similar triangles, the Euclidian comparison angle $\bar \angle_a(b,c) =\bar \angle_a(e,f)$. It follows that $\angle_a(b,c) =\bar \angle_a(b,c)$ and so the triangle $abc$ is flat.  

Thus we may assume that $abc$ is a triangle in satisfying this condition.  For $t \in (0,1)$ choose $e$ and $f$ as above.  Let $a_1,b_1,c_1,e_1,f_1$ be the projections of $a,b,c,e,f$ respectively into $Y$ and
$a_2,b_2, c_2,e_2,f_2$ the projections of $a,b,c,e,f$ respectively into $Z$.  
By similar Euclidean triangles  $d(a_i, e_i) =t d(a_i,b_i)$ and $d(a_i,f_i)= td(a_i,c_i)$ for $i =1,2$.

By hypothesis $d(e,f) = t d(b,c)$ so $d(e,f)^2 = t^2 d(b,c) ^2$.  Thus
$$d(e_1,f_1)^2 +d(e_2,f_2)^2 = t^2d(b_1,c_1) ^2 + t^2d(b_2,c_2)^2$$

By the CAT(0) inequality applied to the triangles $a_ib_ic_i$, $i=1,2$.  
$d(e_i,f_i) \le t d(b_i,c_i)$ for $i=1,2$.  It follows that $d(e_i,f_i) = t d(b_i,c_i)$ for $i=1,2$.
and so the triangle $a_ib_ic_i$ are flat for $i=1,2$.
\end{proof}
\begin{Def}
\cite[I 5.11]{BRI-HAE} For $Y$ and $Z$ metric spaces with metrics bounded by $\pi$, we define
$Y\ast Z$  to be the quotient of the space $Y\times Z \times [0, \frac \pi 2]$ by the identifications
$(y,z,0)= (y,\hat z,0)$ and $(y,z,\frac \pi 2) = (\hat y,z, \frac \pi 2)$.   We denote the class
$(y,z,\theta)$ by $y \cos \theta + z \sin \theta$ so $(y,z,0) = y$ and $(y,z,\frac \pi 2) = z$, and so we have
$Y,Z \subset Y\ast Z$. For $u=y\cos \theta + z \sin \theta, u'=y'\cos \theta' + z' \sin \theta' \in Y\ast Z$ we define $d(u,u')$ by 
$$\cos(d(u,u')) = \cos \theta \cos \theta ' \cos (d(y,y')) +\sin \theta \sin \theta' \cos (d(z,z'))$$
Clearly for $\theta \neq 0 , \frac \pi 2$,  $d(u,u') = \pi$ if and only if $d(y, y') =\pi = d(z,z')$ and $\theta + \theta ' = \frac \pi 2$.

\end{Def} 
\begin{Lem}Let $\alpha$ be a geodesic in $A\ast B$ where $A$ and $B$ are CAT(1) spaces. Notice that
$A$ and $B$ are $\pi$-convex subsets of $A\ast B$.  
If  the image of $\alpha$ misses $B$, then the projection of $\alpha$ to $A$ is a geodesic.
\end{Lem}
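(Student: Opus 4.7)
The plan is to pass to Euclidean cones and invoke Lemma~\ref{L:TG}. Let $C_A$ and $C_B$ denote the Euclidean cones over $A$ and $B$; since $A$ and $B$ are CAT(1) the cones are CAT(0), so their product $C_A\times C_B$ is CAT(0). There is a canonical isometry $C(A\ast B)\cong C_A\times C_B$ (see \cite{BRI-HAE}, I.5.13) that sends the cone point $t\bigl(y\cos\theta+z\sin\theta\bigr)$ to the pair whose $C_A$-coordinate is at radius $t\cos\theta$ in direction $y$ and whose $C_B$-coordinate is at radius $t\sin\theta$ in direction $z$. Under this identification $A\ast B$ becomes the unit sphere around the cone point $o=(o_A,o_B)$, and the hypothesis that $\alpha$ misses $B$ means that the $C_A$-coordinate of $r\alpha(s)$ never degenerates to $o_A$ for $r>0$.

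Next I would lift $\alpha$ to a flat sector. Parametrize $\alpha\colon[0,L]\to A\ast B$ by arclength and set $F=\{r\alpha(s):r\in[0,1],\ s\in[0,L]\}\subset C_A\times C_B$. A direct computation using the product-of-cones metric together with the spherical-join distance formula yields
$$d\bigl(r\alpha(s),\,r'\alpha(s')\bigr)^2 \;=\; r^2+r'^2-2rr'\cos|s-s'|,$$
which is exactly the polar distance on a Euclidean sector of radius $1$ and opening angle $L$. Hence every triangle in $F$ with vertex $o$ is flat in the sense needed by Lemma~\ref{L:TG}, so $F$ is a flat sector in the CAT(0) space $C_A\times C_B$.

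Applying Lemma~\ref{L:TG}, the projection $F_A:=\pi_{C_A}(F)$ is a flat sector in $C_A$ with apex $o_A$. Writing $\alpha(s)=y(s)\cos\theta(s)+z(s)\sin\theta(s)$, the image $\pi_{C_A}(r\alpha(s))$ lies at radius $r\cos\theta(s)>0$ from $o_A$ along the ray in direction $y(s)$, so the link of $o_A$ inside $F_A$ is the arc $\{y(s):s\in[0,L]\}$ inside the space of directions at $o_A$ in $C_A$, which is $A$ itself. The link at the apex of any flat sector is a geodesic segment of the space of directions (read off from the Euclidean polar model), and so $\{y(s):s\in[0,L]\}$ is the image of a geodesic of $A$; this set is precisely the image of the projection of $\alpha$ to $A$, proving the lemma. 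The main obstacle is the preliminary setup—identifying the ambient space with a product of cones and producing the flat sector lift—after which Lemma~\ref{L:TG} does the work; the only remaining case to check is when $F_A$ degenerates to a single ray, in which case the projection is a single point and the claim is trivial.
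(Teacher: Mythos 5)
Your proof is correct and follows essentially the same route as the paper: identify $C(A\ast B)$ with the product of cones $C_A\times C_B$, cone $\alpha$ off to a flat sector $F$ at the apex, project via Lemma \ref{L:TG}, and read the geodesic off the link of the projected sector. You additionally supply the polar distance computation verifying that $F$ is genuinely flat (a step the paper merely asserts), but otherwise the two arguments coincide.
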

\begin{proof}
Let $Y = C_0(A)$ and $Z= C_0(B)$ be the Euclidian cones on $A$ and $B$ respectively.  By \cite{BRI-HAE} $C_0(A\ast B) = X \cong Y \times Z$ and so $A\ast B = \bd (Y\times Z)$. 
By \cite[I 5.15]{BRI-HAE} It suffices to show that the projection of $\alpha $ to $A$ is a local Tits geodesic.
 There is a flat sector $F \subset X$ based at the cone point 
$0$ of $X$ with $\bd F = \alpha$.  We can think of $Y$ and $Z$ as being convex subsets of $X$.  By Lemma \ref{L:TG}, the projection $F_Y$ of $F$ into $Y$ is a Euclidean sector based at the cone point of $Y$ (which is $0$ of course).  By the proof of \cite[I 5.15]{BRI-HAE} the $\bd F_Y$ will be the projection of $\alpha$ to $A$.  It follows that this projection is a Tits geodesic in $A$.  
\end{proof}

 \begin{Thm} \label{T:decomp}If $Z$ is a finite dimensional complete CAT(1) space, then there is a unique decomposition 
$Z = S \ast D$ where $S\cong \SSS^n$ and $D$ doesn't have a non-empty sphere as a spherical join factor.  
\end{Thm}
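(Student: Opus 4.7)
The plan is to handle existence and uniqueness separately. For existence I proceed by induction on $\dim Z$, peeling off $\SSS^0$ factors one at a time: if $Z$ admits any sphere factor then it admits an $\SSS^0$ factor (since $\SSS^{n} = \SSS^0 \ast \SSS^{n-1}$), so I can write $Z = \SSS^0 \ast Z_1$ where $\dim Z_1 = \dim Z - 1$. By the inductive hypothesis $Z_1 = \SSS^m \ast D$ with $D$ containing no sphere factor, so $Z = \SSS^{m+1} \ast D$. The base case ($Z$ having no $\SSS^0$ factor) is trivial with $S = \emptyset$.

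For uniqueness, the strategy is to characterize the sphere factor $S$ intrinsically inside $Z$. Define the \emph{spherical locus} $\Sigma(Z)$ to be the set of points $p \in Z$ for which there exists $p' \in Z$ with $d(p,p') = \pi$ and such that every $q \in Z$ satisfies $d(p,q) + d(q,p') = \pi$ (equivalently, every $q$ lies on some geodesic from $p$ to $p'$). I claim that for any decomposition $Z = S \ast D$ with $S \cong \SSS^n$ and $D$ containing no sphere factor, $\Sigma(Z) = S$. Granting this, $S$ is intrinsically determined, and then $D$ is recovered as the set of points at distance $\pi/2$ from all of $S$, equipped with the induced metric, giving uniqueness.

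The inclusion $S \subset \Sigma(Z)$ is a direct computation with the join metric: for any $p \in S$ the antipode $-p \in S$ exists (since $S$ is a sphere), and using that every point of $\SSS^n$ lies on a geodesic between antipodes one checks $d(p,q) + d(q,-p) = \pi$ for any $q = s\cos\theta + d\sin\theta$ via $\arccos(x) + \arccos(-x) = \pi$. For the reverse inclusion, suppose $p = s_0\cos\theta + d_0\sin\theta \in \Sigma(Z)$ with $\theta > 0$; the distance formula forces any antipode to have the form $-p = (-s_0)\cos\theta + d_0'\sin\theta$ with $d_0'$ antipodal to $d_0$ in $D$, and restricting the sphere-pair condition to points $q \in D$ yields $d(d_0,q) + d(q,d_0') = \pi$ for every $q \in D$. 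Thus $(d_0, d_0')$ is itself a sphere pair in $D$.

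The main obstacle, which I would isolate as a preparatory lemma, is that the existence of a sphere pair $(p,p')$ in a finite dimensional complete CAT(1) space $D$ forces a splitting $D = \SSS^0 \ast D'$, contradicting the hypothesis that $D$ has no sphere factor. I would prove this lemma by taking $D' = \{q \in D : d(p,q) = \pi/2\}$, showing that each $q \in D \setminus \{p,p'\}$ lies on a unique geodesic from $p$ to $p'$ (using CAT(1) geodesic uniqueness within balls of radius $\pi/2$ and the sphere-pair condition), and parametrizing $D$ by the pair $(r,t) \in D' \times [0,\pi]$ where $t = d(p,q)$ and $r$ is the midpoint of the geodesic through $q$. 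The metric identification with the spherical join $\SSS^0 \ast D'$ then reduces to verifying the spherical law of cosines for geodesic triangles with one vertex at $p$ or $p'$, which follows from the CAT(1) comparison inequality combined with the equality $d(p,q) + d(q,p') = \pi$ saturating the triangle inequality.
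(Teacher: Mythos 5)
Your proof is correct in outline, and its skeleton (existence by peeling off sphere factors until finite dimensionality stops you; uniqueness by identifying $S$ intrinsically as the set of suspension points of $Z$) matches the paper's, which characterizes $S$ as $\{a\in Z:\ Z$ is a spherical suspension with suspension point $a\}$. Where you genuinely diverge is in the key reduction showing that a suspension point outside $S$ is impossible. The paper argues geometrically: it realizes each Tits geodesic from $a$ to $b$ as the boundary of a flat sector in the Euclidean cone, projects that sector to the cone over the complementary factor $U$ (its Lemma on projections of flat sectors), and concludes that $U$ is a union of geodesics between a pair of antipodes. You instead restrict the betweenness identity $d(p,q)+d(q,p')=\pi$ to $q\in D$ and read off $d(d_0,q)+d(q,d_0')=\pi$ directly from the spherical join distance formula, which is shorter and avoids the cone/flat-sector machinery entirely; this is a real simplification. (Your antipode condition $\theta'=\theta$ is the correct one; the paper's stated condition $\theta+\theta'=\tfrac\pi2$ reflects its asymmetric parametrization of $b$.) Both arguments then funnel into the same crucial rigidity statement: a complete CAT(1) space that is the union of geodesics between a pair of antipodes is a spherical suspension over the equator $\{q: d(p,q)=\pi/2\}$. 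The paper simply cites this as \cite[4.1]{LYT}; you propose to reprove it, and that is the one soft spot in your write-up. The hard content there is not the triangles with a vertex at $p$ or $p'$ but the verification of the join metric between two arbitrary points $q_1,q_2$ --- one must show the comparison angle at $p$ is realized, is independent of the positions of $q_i$ on their respective geodesics, and computes the equatorial distance $d(r_1,r_2)$ (the ``lune lemma''). Either flesh that out or cite Lytchak as the paper does; with that reference in hand your argument is complete.
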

\begin{proof} Suppose that $Z = \SSS^k\ast U$ and $Z = \{a,b\} \ast A$, where $\{a,b\} \not \subset \SSS^k$.  It follows that $a$ is the unique antipode to $b$ in $Z$ and visa versa (so $\{a,b\}\cap \SSS^k= \emptyset$).  By the metric on $Z = \SSS^k \ast U$, we have $a_1,b_1 \in \SSS^k$, 
$a_2,b_2 \in U$ and $\theta \in  (0,\pi/2]$ with $a = a_1\cos \theta +a_2\sin \theta$ and $ b = b_1\sin \theta + b_2\cos \theta$.  If follows that $a_2$ is the unique antipode of $b_2$ in $U$ and visa versa (and similarly for $a_1$ and $b_1$ in $\SSS^k$).  

Since $Z =  \{a,b\} \ast A$ then $Z = S_a^b$, and every point of $Z$ is  on a Tits geodesic from $a$ to $b$.  Fix $u \in U \subset \SSS^k \ast U$;  the point $u$ is on a Tits geodesic from $a$ to $b$  which misses $\SSS^k$.  By Lemma \ref{L:TG}, the projection of this Tits geodesic into $U$  is a Tits geodesic from $a_2$ to $b_2$ passing through $u$.   It follows that $U$ is the union of all Tits geodesics in $U$ from $a_2$ to $b_2$.  This implies by \cite[4.1]{LYT} that $U = \{a_2,b_2\}\ast W$ where $W$ is the set of all points of $U$ at Tits distance $\pi/2$ from both $a_2$ and $b_2$.  Thus by \cite[I 5.15]{BRI-HAE}, $Z= \SSS^k\ast(\{a_2,b_2\} \ast W) = (\SSS^k \ast \{a_2,b_2\}) \ast W= \SSS^{k+1} \ast W$.  Since $Z$ is finite dimensional this process must terminate, and so for some $n$, $Z = \SSS^n \ast  Y$ where $Y$ doesn't have a non-empty sphere as a spherical join factor.  Notice by our construction that $$S= \{
a \in Z:\, Z \text{ is a spherical suspension with suspension point }a \}$$  Thus this decomposition is canonical.
\end{proof}
\begin{Cor}\label{C:sus}  Under the hypothesis of Lemma \ref{L:limprod},  if $Y \cong \E^n$ for some $n$ then $\bd Y \subset S$, the set of suspension points of $\bd X$. \end{Cor}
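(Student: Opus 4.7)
The strategy is to combine Lemma \ref{L:limprod} with the join formula for Tits boundaries of CAT(0) products and the explicit characterization of $S$ extracted from the proof of Theorem \ref{T:decomp}. Since $\E^n$ admits a cocompact isometry group, the stronger conclusion of Lemma \ref{L:limprod} applies: there exist convex subsets $\hat Y, \hat Z\subset X$ with $\hat Y\cong Y\cong \E^n$, a convex product region $\hat W\cong \hat Y\times \hat Z$ inside $X$, and a constant $K$ with $X\subset \nb(\hat W, K)$.

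The first task is to identify $TX$ with $T\hat W$ isometrically. Because $\hat W$ is closed and convex in the proper CAT(0) space $X$, the inclusion $\hat W \hookrightarrow X$ induces an isometric embedding $T\hat W \hookrightarrow TX$ (angles and asymptotic rays from basepoints in $\hat W$ are computed inside $\hat W$ by convexity). Because $X$ lies in a uniform neighborhood of $\hat W$, every geodesic ray in $X$ is asymptotic to a ray in $\hat W$, so this embedding is surjective. Hence $TX$ coincides with $T\hat W$ as a CAT(1) space.

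Applying the standard identification of the Tits boundary of a CAT(0) product with the spherical join of its factors' Tits boundaries, I obtain
$$TX \;=\; T\hat W \;=\; T\hat Y \ast T\hat Z \;=\; \SSS^{n-1} \ast T\hat Z,$$
using $\hat Y\cong \E^n$. Now fix any $a\in \bd Y$; under the identification $\bd Y=\bd \hat Y=\SSS^{n-1}$, the point $a$ has a unique Tits antipode $b\in \SSS^{n-1}$, and the standard decomposition $\SSS^{n-1}=\{a,b\}\ast \SSS^{n-2}$ together with associativity of spherical joins gives
$$TX \;=\; \{a,b\} \ast (\SSS^{n-2} \ast T\hat Z),$$
so $TX$ is a spherical suspension with suspension point $a$. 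By the explicit characterization of $S$ recorded in the proof of Theorem \ref{T:decomp}, this means $a\in S$. Ranging $a$ over $\bd Y$ finishes the proof.

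The main obstacle is the opening identification of $TX$ with $T\hat W$ as CAT(1) spaces: convexity of $\hat W$ is needed for the Tits-isometric embedding, and the uniform-neighborhood containment is needed for surjectivity. Both ingredients are delivered by Lemma \ref{L:limprod}, so the step reduces to invoking standard CAT(0) facts. Once that identification is in hand, the rest is routine manipulation of spherical joins plus the characterization of $S$ from Theorem \ref{T:decomp}.
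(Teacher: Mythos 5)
Your argument correctly establishes that $\bd \hat Y \subset S$: the identification $TX = T\hat W = \bd\hat Y \ast \bd\hat Z$ and the join manipulation $\SSS^{n-1}=\{a,b\}\ast\SSS^{n-2}$ are essentially the first half of the paper's proof. The gap is the phrase ``under the identification $\bd Y=\bd\hat Y=\SSS^{n-1}$.'' The corollary concerns $\bd Y$, the boundary of the original convex factor $Y\subset W\subset X$, whereas $\hat Y$ is produced in Lemma \ref{L:limprod} as a limit of the translates $g_i(Y_i)$ of parallel copies of $Y$ pushed off to infinity; the lemma gives only an abstract isometry $\hat Y\cong Y$, not an equality of $\bd Y$ and $\bd\hat Y$ as subsets of $\bd X$. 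A priori $\bd Y$ is just some isometrically embedded round $(n-1)$-sphere inside the join $TX=\bd\hat Y\ast\bd\hat Z$, and such a sphere need not be the distinguished join factor (compare the many round circles in $\SSS^1\ast\SSS^1=\SSS^3$ that are neither factor), so membership in $S$ does not follow.

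The paper closes exactly this gap with a dynamical argument that your proof omits. Take $\alpha\in\bd Y-S$; writing $\bd X=S\ast D$ as in Theorem \ref{T:decomp}, one gets $d_T(\alpha,D)<\pi/2$. The isometries $g_i$ from the proof of Lemma \ref{L:limprod} carry $\alpha$ (which lies in each $\bd Y_i$) to a limit $\beta\in\bd\hat Y\subset S$, while they preserve the canonical factor $D$; lower semicontinuity of $d_T$ then yields $d_T(\beta,D)\le d_T(\alpha,D)<\pi/2$, contradicting the fact that every point of $S$ is at Tits distance $\pi/2$ from every point of $D$. You need this step, or some other argument actually locating $\bd Y$ inside $\bd\hat Y$ (equivalently inside $S$), to complete the proof.
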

\begin{proof} By \cite{SWE} $\bd X$ is finite dimension, so $TX$ is finite dimensional,  and $\bd X = S \ast D$ as in Theorem \ref{T:decomp}.
  Since $\E^n $ admits a cocompact isometric group action, $\hat Y \cong Y \cong \E^n$.
$\bd X = \bd (\hat Y \times\hat Z) = ( \bd Y)\ast (\bd \hat Z)$.  Since $\bd \hat Y \cong \bd \E^n= S^{n-1}$ then $\bd \hat Y \subset S$.  Let $\alpha \in \bd Y - S$, so $d_T(\alpha, D) < \pi/2$.
We may assume  $g_i(\alpha) \to \beta$ preforce with $\beta \in \bd \hat Y \subset S$.  However $g_i(D) = D$ for all $i$, so $d_T(\beta, D) \le d_T(\alpha, D) < \pi /2$, but that is a contradiction since every point of $S$ is distance $\pi /2 $ form every point of $D$.  Thus $\bd Y \subset S$.
\end{proof}
\begin{Thm} Let $g$ be a hyperbolic isometry of the cocompact CAT(0) space $X$.  If $\Omega = \bd X - S_g \neq \emptyset$, then $\Omega$ is a dense open subset of $\bd X$, and 
$\langle g \rangle $ acts discretely on $\Omega$. \end{Thm}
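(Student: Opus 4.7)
The plan is first to dispose of the rank 1 case, where Theorem \ref{T:Ball} finishes the job, and then to assume $g$ is not rank 1, so $d_T(g^+,g^-)=\pi$. In that case discreteness of the $\langle g\rangle$-action on $\Omega$ is already Lemma \ref{L:discrete}, so only openness and density remain.

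For both of these I would work with the parallel set $W=P(L)$ of an axis $L$ of $g$. By standard CAT(0) theory $W$ is closed convex and splits as a metric product $W=L\times C$ for some CAT(0) space $C$, so $\bd W=\{g^+,g^-\}\ast \bd C$, which is visibly a subset of $S_g$. Conversely, any Tits geodesic of length $\pi$ from $g^-$ to $g^+$ arises from a flat half-plane in $X$ bounded by a geodesic from $g^-$ to $g^+$; any such bounding geodesic is parallel to $L$ and hence lies in $P(L)$, and so does the whole half-plane. Thus $S_g=\bd W$, which is closed in $\bd X$ by properness, giving openness of $\Omega$.

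For density I would argue by contradiction. Suppose some nonempty open $V\subset\bd X$ satisfies $V\subset S_g=\bd W$ with $W=L\times C$. Since $L\cong\R=\E^1$ admits a cocompact isometric action, Corollary \ref{C:sus} applied to the factor $Y=L$ forces $\bd L=\{g^+,g^-\}\subset S$, where $\bd X=S\ast D$ is the canonical decomposition from Theorem \ref{T:decomp}. Because $d_T(g^+,g^-)=\pi$, the pair $g^\pm$ is antipodal in the sphere $S$, so $\bd X=\{g^+,g^-\}\ast D'$ for some CAT(1) space $D'$. Every point of $\bd X$ therefore lies on a Tits geodesic of length $\pi$ from $g^-$ to $g^+$, i.e.\ $S_g=\bd X$ and $\Omega=\emptyset$, contradicting the hypothesis.

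The main delicate step to watch is the identification $S_g=\bd P(L)$: the nontrivial direction is that every Tits geodesic of length $\pi$ from $g^-$ to $g^+$ is realised by a flat half-plane lying inside $P(L)$. This rests on two standard CAT(0) facts—that two geodesic lines sharing both ideal endpoints bound a flat strip, and that parallel sets split as metric products—so no genuinely new work is needed, but the argument should be written with care.
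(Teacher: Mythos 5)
Your proposal is correct and follows essentially the same route as the paper: discreteness from Lemma \ref{L:discrete}, and density by taking the parallel set of an axis as the convex product $\R\times Z$, applying Corollary \ref{C:sus} to place $g^{\pm}$ among the suspension points, and concluding $S_g=\bd X$. You in fact supply details the paper leaves implicit (the identification $S_g=\bd P(L)$ via the flat half-plane lemma, and hence closedness of $S_g$), which is a welcome sharpening rather than a departure.
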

\begin{proof} By Lemma \ref{L:discrete}, $\langle g \rangle $ acts discretely on the open set  $\Omega$.  We must show that $\Omega$ is dense whenever it is non-empty.

 Suppose that $\Omega $ is not dense. Then there is an open subset $V$ of $\bd X$ with $V \subset \bd S_g$.  Recall that $S_g \cong \R \times Z$ for some $Z$, where the endpoints of $\R$ are $g^\pm$. Thus by Corollary \ref{C:sus}, $g^\pm \in S$, the set of suspension points of $\bd X$.  It follows that $S_g = \bd X$ as required.
\end{proof}

\end{document}
\bye